\DeclareMathOperator{\diam}{diam}
\DeclareMathOperator{\supp}{supp}
\newcommand{\norm}[1]{\left\lVert#1\right\rVert}
\newcommand{\Leb}{\mathscr{L}}
\newcommand{\setN}{\mathbb{N}}
\newcommand{\N}{\mathbb{N}}
\newcommand{\R}{\mathbb{R}}
\newcommand{\X}{\mathsf{X}}
\newcommand{\p}{\mathtt p} 
\newcommand{\de}{\ensuremath{\, \mathrm d}} 
\newcommand{\dee}{\ensuremath{\mathrm d}}
\newcommand{\suchthat}{\ensuremath{\,:\,}} 
\newcommand\restr[2]{{
  \left.\kern-\nulldelimiterspace 
  #1 
  \right|_{#2} 
  }}
\newcommand{\weakto}{\rightharpoonup}
\newcommand{\CD}{\mathsf{CD}}
\DeclareMathOperator{\res}{restr}
\newcommand{\rest}[2]{{{\rm restr}_{#1}^{#2}}}
\newcommand{\di}{\mathsf d} 
\newcommand{\m}{\mathfrak m} 
\DeclareMathOperator{\Geo}{Geo}
\newcommand{\OptPlans}{\mathsf{Opt}}
\newcommand{\OptGeo}{\mathsf{OptGeo}}
\newcommand{\Prob}{\mathscr{P}}
\newcommand{\ProbTwo}{\mathscr{P}_2}
\newcommand{\ppi}{{\mbox{\boldmath$\pi$}}}
\numberwithin{equation}{section}
\newtheoremstyle{remark}
        {10pt}
        {10pt}
        {}
        {}
        {\itshape}
        {.}
        {.4em}
        {}
\newtheoremstyle{proof}
        {10pt}
        {10pt}
        {}
        {}
        {\itshape}
        {.}
        {.4em}
        {}
\newtheoremstyle{definition}
        {10pt}
        {10pt}
        {}
        {}
        {\bfseries}
        {.}
        {.4em}
        {}
\newtheoremstyle{theorem}
        {10pt}
        {10pt}
        {\slshape}
        {}
        {\bfseries}
        {.}
        {.4em}
        {}
\theoremstyle{theorem}
\newtheorem{theorem}{Theorem}[section]
\newtheorem{prop}[theorem]{Proposition}
\newtheorem{corollary}[theorem]{Corollary}
\newtheorem{lemma}[theorem]{Lemma}
\theoremstyle{definition}
\newtheorem{definition}[theorem]{Definition}
\theoremstyle{remark}
\newtheorem{remark}[theorem]{Remark}
\theoremstyle{proof}
\newtheorem*{pro}{Proof}
\newenvironment{pr}{\begin{pro}%
 \pushQED{\qed}}%
 {\popQED\end{pro}}
\title{Optimal maps and local-to-global property in negative  dimensional spaces
with Ricci curvature bounded from below}
\author{Mattia Magnabosco\thanks{Institut f\"ur Angewandte Mathematik, Universit\"at Bonn. Email: magnabosco@iam.uni-bonn.de}, Chiara Rigoni\thanks{Institut f\"ur Angewandte Mathematik, Universit\"at Bonn. Email: rigoni@iam.uni-bonn.de}}
\begin{document}
\maketitle

\begin{abstract}
In this paper we investigate two important properties of metric measure spaces satisfying the reduced curvature-dimension condition for negative values of the dimension parameter: the existence of a transport map between two suitable marginals and the so-called local-to-global property. 
\end{abstract}

\tableofcontents

\section{Introduction}

The class of metric measure spaces satisfying the $\CD(K, N)$-condition for $N < 0$ was first introduced by Ohta in \cite{Ohta16}, where the range of admissible ``dimension parameters'' in the theories of $(K, N)$-convex functions and of $\CD(K,N)$-spaces was extended to negative values. A further study of this family of spaces has been carried out in the recent work \cite{MRS21}, where a new setting to introduce the curvature-dimension condition for negative values of $N$ was proposed, extending and complementing the work by Ohta. Motivated by some of the examples shown in \cite[Section 3.2]{MRS21} (e.g., the interval $I := [-\pi/2, \pi/2]$ equipped with the Euclidean distance and the weighted measure $\de \m(x) := \cos^N(x) \de\mathcal{L}^1(x)$, $\mathcal{L}^1$ being the 1-dimensional Lebesgue measure on $I$, is a $\CD(N, N)$ space for any $N < -1$), the structures on which the study is focused are complete and separable metric spaces endowed with quasi-Radon measures, i.e., measures which are Radon outside a negligible set of singular points in which the measure can explode. In particular, this setting is more general than the classical one where the notion of curvature-dimension bounds  is introduced for non-smooth structures (see \cite{Sturm06I}, \cite{Sturm06II}, \cite{Lott-Villani09}): in fact, in the case in which the dimensional parameter is positive, the $\CD$-condition is introduced in complete and separable metric spaces equipped with Radon measures. In this regard, it is important to recall that the class of  metric measure spaces satisfying the $\CD(K, N)$-condition for some $N < 0$  includes all $\CD(K, \infty)$ spaces and so also all the  $\CD(K, N)$ ones for any $N > 0$. Furthermore, in \cite{Ohta16} the definition of reduced curvature-dimension condition was extended to the setting of negative values of the dimensional parameter. This condition, denoted by $\CD^\ast(K, N)$ and first introduced by Bacher-Sturm in \cite{BacherSturm10} for $N \ge 1$, is obtained from the $\CD(K, N)$-condition by replacing the volume distortion coefficients $\tau_{K, N}^{(t)}( \cdot )$ by the slightly smaller coefficients $\sigma_{K, N}^{(t)}( \cdot )$. Analogously to the result for $N \ge 1$ (see \cite[Proposition 2.5(i)]{BacherSturm10}), also in the case when $N <0$ the original curvature-condition $\CD(K, N)$ implies the reduced one $\CD^\ast(K, N)$, as proven in \cite[Proposition 4.7]{Ohta16}.\\

In this paper we prove two important properties of metric measure spaces satisfying the reduced curvature-dimension condition for negative values of the dimension parameter: the existence of a transport map between two suitable marginals and the so-called local-to-global property. Their validity for positive dimensional $\CD$ spaces has been established in a series of works that we are going to recall in the following. It is important to underline that in this setting the proofs of these two properties  rely heavily  on the lower semicontinuity of the characterizing entropy functional, which allows to perform some nice approximation arguments. Unfortunately in the context of quasi-Radon measures the lower semicontinuity of the entropy functional does not hold on the whole $\mathscr P_2(\X)$: instead our proofs are based on suitable extensions of these classical arguments, in which we have to pay particular attention to the set of singular points of the reference measure.\\

The problem of addressing existence and/or uniqueness of optimal transport maps between two given marginals has a long history, since it represents the original formulation of the optimal transport problem by Monge. The first positive answers were given by Brenier \cite{Brenier87} in the Euclidean setting, by McCann \cite{McCann01} in the setting of Riemannian manifolds, by Ambrosio-Rigot \cite{AR04} and Figalli-Rifford \cite{FR10} in the context of sub-Riemannian manifolds and by Bertrand \cite{Ber08} for Alexandrov spaces. In the context of metric measure spaces, most of the results are proven under a non-branching assumption and a metric curvature bound. In particular, we recall the works by Gigli \cite{Gigli12a}, Rajala-Sturm \cite{RajalaSturm12}, Gigli-Rajala-Sturm \cite{GigliRajalaSturm13} and Cavalletti-Mondino \cite{CM17}. Some results can also be obtained once the space is assumed to be non-branching and a quantitative property on the reference measure, related to the shrinking of sets to points, holds true (see the work by Cavalletti-Huesmann \cite{Cavalletti-Huesmann13} and the one by Kell \cite{Kell17}). As shown by Rajala in \cite{Rajala16}, the non-branching assumption is necessary to prove the uniqueness of the transport map. Nevertheless, some existence results can be proven also without this assumption, see for example the paper by Shultz \cite{Sch18} and by the first author \cite{Mag21}.

In this paper  we focus on the case of non-branching spaces satisfying the $\CD^\ast(K, N)$-condition for some $N < 0$ and we prove the existence and uniqueness of the optimal transport map when the marginals have finite entropy and bounded support. We then use this uniqueness result in order to show the existence of a transport map between two general absolutely continuous marginals with possibly unbounded support.\\

Also the local-to-global property of the curvature-dimension condition is a very important and fundamental feature. In fact, it shows that the metric measure space version of curvature-dimension bounds is a local requirement, as it happens in the case of Ricci curvature bounds in Riemannian manifolds. In the positive dimensional case, the local-to-global property was proven by Sturm \cite[Theorem 4.17]{Sturm06II} for the $\CD(K,\infty)$-condition, by Villani \cite{Villani09} for the $\CD(0, N)$-condition and then by Bacher-Sturm \cite[Theorem 5.1]{BacherSturm10} for the general $\CD^\ast(K,N)$ one. Thereafter, the globalization of the $\CD(K,N)$ condition was proven by Cavalletti-Milman \cite{CavMil16} with a much more sophisticated argument, which allowed them to demonstrate other remarkable properties of the $\CD$ condition.  We remark that a result similar to the one in \cite{CavMil16} seems out of reach in the context of $\CD(K, N)$ spaces with $N < 0$, due to the pathologies of the reference measure: what we prove in the last section of this paper is the equivalence between the local version of the $\CD^\ast(K-, N)$-condition and the global one, provided that the metric measure space $(\X, \di, \m)$ is locally compact. The main reason why we need to work with the $\CD^\ast(K-, N)$-condition, which consists in asking for the validity of the $\CD$ condition for any $K' \le K$, is to gain some flexibility that makes possible to find suitable discrete approximation arguments. In contrast to the positive dimensional case, in general, the $\mathsf{CD}^*(K-,N)$-condition in this setting is not equivalent to the $\mathsf{CD}^*(K,N)$ one. This is basically due to the fact that, in the negative dimensional case, the reference entropy functional is not lower semicontinuous in the Wasserstein space. However this equivalence holds when $K \ge 0$, which in particular ensures the validity of the local-to-global property for $\CD^\ast(K, N)$-spaces with $N < 0 \le K$.\\

\noindent{\bfseries Acknowledgements:} The second author gratefully acknowledges support by the European Union through the ERC-AdG 694405 RicciBounds. 

\section{Setting and preliminary results}

\subsection{Metric spaces and Wasserstein distance}

In this paper, $(\X, \di)$ will always denote a complete and separable metric space. The set $\mathscr P_2(\X)$ is the set of probability measures with finite second moment, that we endow with the Wasserstein distance $W_2$, defined by
\begin{equation}\label{eq:defW2}
    W_2^2(\mu, \nu) := \inf_{\pi \in \mathsf{Adm}(\mu,\nu)} \int \di^2(x, y) \, \de \pi(x, y),
\end{equation}
where $\mathsf{Adm}(\mu, \nu)$ is the set of all the admissible transport plans between $\mu$ and $\nu$, namely all the measures in $\mathscr P(\X^2)$ such that $(\p_1)_\sharp \pi = \mu$ and $(\p_2)_\sharp \pi = \nu$. The set of optimal transport plans between $\mu$ and $\nu$, that is the set of the admissible plans which realize the infimum in \eqref{eq:defW2}, is denoted by $\OptPlans(\mu,\nu)$.

In the following $C([0, 1], \X)$ will stand for the space of continuous curves from $[0, 1]$ to $\X$, endowed with the sup-norm, and we recall that this  is a complete and separable space. Hence, for any $t \in [0, 1]$ we define the evaluation map $e_t \colon C([0, 1], \X) \to \X$ by setting $e_t(\gamma) := \gamma_t$ and the stretching/restriction operator $\rest{r}{s}$ in $C([0, 1], \X)$, defined, for all $0\leq r<s\leq1$, by
\[
[\rest{r}{s}(\gamma)]_t := \gamma_{r + t(s-r)}, \qquad t \in [0, 1].
\]
A curve $\gamma \colon [0, 1] \to \X$ is a (minimizing constant speed) geodesic if
\[
\di(\gamma_s, \gamma_t) = |t-s| \di(\gamma_0, \gamma_1) \quad \text{for every }s,t\in[0,1],
\]
we indicate by $\Geo(\X)$  the space of geodesics on $\X$, endowed with the sup-norm. Observe that $\Geo(\X)$ is complete and separable as soon as $(\X, \di)$ satisfies the same properties.

In this terminology, if $\mu, \nu \in \mathscr P_2(\X)$ are joined by a geodesic, then their $W_2$-distance can be equivalently characterized as
\[
W_2^2(\mu, \nu) = \min_{\ppi} \int_0^1 |\dot{\gamma_t}|^2 \, \de t \de \ppi(\gamma),
\]
the minimum being taken among all $\ppi \in \mathscr P(C([0, 1], \X))$ such that $(e_0)_\sharp \ppi=\mu$ and $(e_1)_\sharp \ppi = \nu$. The set of minimizers will be denoted by $\OptGeo(\mu, \nu)$ and its elements will be called optimal geodesic plans. We underline that optimal geodesic plans are always supported in the set $\Geo(\X)$ and that a curve $(\mu_t)_{t\in[0,1]}$ is a geodesic connecting $\mu$ and $\nu$ if and only if there exists $\ppi \in \OptGeo(\mu, \nu)$ such that
\[
\mu_t = (e_t)_\sharp \ppi.
\]

\subsection{Metric spaces equipped with quasi-Radon measures}

In this section we present the setting we are going to work in. In order to motivate it, we recall that in \cite{MRS21} some new 1-dimensional model space have been provided, such as:
\begin{enumerate}
    \item the weighted space
\[
\Big(\R, |\cdot|, V \Leb^1\Big) \quad \text{ with } V(x) = \sinh\bigg(  x \sqrt{- \dfrac{K}{N}} \bigg)^N,
\]
which satisfies the the curvature-dimension condition $\CD(K, N+1)$ for any $K > 0$ and  $N < -1$;
\item the space $(\R, |\cdot|, |x|^{N} \Leb^1)$, which is a $\CD(0, N+1)$-space  for any $N < -1$;
\item the weighted space
\[
\Bigg(\Bigg[ - \dfrac{\pi}{2} \sqrt{\dfrac{K}{N}},  \dfrac{\pi}{2} \sqrt{\dfrac{K}{N}} \Bigg], |\cdot |,  \cos \bigg(  x \sqrt{\dfrac{K}{N}}\bigg) ^N \, \Leb^1 \Bigg)
\]
which is a $\CD(K, N+1)$-space for any $K < 0$  and $N < -1$.
\end{enumerate}
These examples show in particular that, in negative dimensional CD spaces, the reference measure does not need to be locally finite. Therefore we introduce the class of quasi-Radon measures, specializing to the case of measures defined on a complete and separable metric space. We refer to \cite[Section 2.1]{MRS21} for the precise definitions and constructions, presented in the more general setting of topological measure spaces.

\begin{definition}[Quasi-Radon measures]
Let $\m$ be a Borel measure defined on a complete and separable metric space $(\X, \di)$. We say that $\m$ is a quasi-Radon measure if it is complete and effectively locally finite, meaning that for every Borel $A\subset \X$ with $\m(A)>0$, there exists an open set $U\subset \X$ with finite measure such that  $\m(A\cap U)>0$.
\end{definition}
In particular, in \cite[Proposition 2.7]{MRS21}, it is proven that any quasi-Radon measure $\m$ defined on a complete and separable metric space $(\X, \di)$ is $\sigma$-finite and has the following property:
\begin{equation}\label{def:Sm}
\begin{split}
    \text{there exists} & \text{ a closed set } \mathcal S_\m \subset \X \text{ with empty interior and } \m(\mathcal S_\m) = 0\\ &\text{ such that } \m|_{\X \setminus \mathcal S_\m} \text{ is a Radon measure on } \X \setminus \mathcal S_\m.
\end{split}
\end{equation}
Intuitively, the set $\mathcal S_\m$ consists of all points $x$ such that $\m(U) = \infty$ for any open neighborhood $U$ of $x$ in $\X$. In the following, we will refer to $\mathcal S_\m$ as the singular set associated to $\m$.

We recall that in this setting it is still possible to speak about absolutely continuity of a measure $\mu$ with respect to $\m$ (see \cite[Definition 2.8, Proposition 2.9]{MRS21}) and that, in this case, a suitable extension of the Radon-Nikodym Theorem holds (see \cite[Theorem 2.10]{MRS21}). In fact, if a measure $\mu$ is absolutely continuous with respect to the quasi-Radon reference measure $\m$, then there exists a measurable function $f$ on $\X$ such that for any $B \in \mathcal{B}(\X)$ it holds 
\[\mu(B) = \int_B f \, \de \m.\]

In the following we will always refer to structures $(\X, \di, \m)$ where $(\X, \di)$ is a complete and separable metric space and $\m$ is a quasi-Radon measure on $\X$, $\m \neq 0$. Moreover, up to consider the metric measure space $(\supp(\m),\di,\m)$, we can assume, without losing generality, that $\m$ has full support, that is $\supp(\m)=\X$.

\subsection{Reduced Curvature-Dimension Condition $\CD^\ast(K, N)$ for $N < 0$}

Let $(\X, \di, \m)$ be a metric measure space and $N < 0$. We introduce the \emph{R\'enyi entropy} $S_{N,\m}$ with respect to the reference measure $\m$ as the functional defined on $\mathscr{P}(\X)$ by 
\begin{equation*}
S_{N,\m}(\mu):= \begin{cases}
\displaystyle \int_{\X} \rho(x)^{\frac{N-1}{N}} d\m(x) &\qquad \text{if } \, \mu \ll  \m, \, \mu = \rho \m,\smallskip\\
+\infty &\qquad \text{otherwise}.
\end{cases}
\end{equation*}
 As already pointed out in the introduction, if $\m$ is a Radon measure, the functional $S_{N,\m}$ is lower semicontinuous with respect to the weak topology and thus it is lower semicontinuous with respect to the Wasserstein convergence in $\Prob_2(\X)$. Unfortunately, the same property does not hold for quasi-Radon reference measures $\m$, but it is possible to prove the following nice result (see \cite[Proposition 4.8]{MRS21}):

\begin{prop}\label{prop:lsc}
In a metric measure space $(\X,\di,\m)$ the R\'enyi entropy functional $S_{N,\m}$ is weakly lower semicontinuous on the space
\begin{equation*}
    \Prob^\mathcal S(\X,\m):= \{\mu\in \Prob_2(\X) \,: \, \mu(\mathcal S_\m)=0\},
\end{equation*}
where $\mathcal S_\m$ is the singular set associated to the measure $\m$.
\end{prop}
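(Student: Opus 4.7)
\begin{pr}
The plan is to reduce the lower semicontinuity of $S_{N,\m}$ to the classical weak lower semicontinuity of the $L^q$-norm, exploiting the reflexivity of $L^q(\m)$. Set $q := (N-1)/N$; since $N < 0$, one has $q > 1$, and whenever $\mu = \rho\m$
$$S_{N,\m}(\mu) = \int \rho^q \, \de\m = \|\rho\|_{L^q(\m)}^q.$$

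Fix a sequence $(\mu_n)_n \subset \Prob^{\mathcal S}(\X,\m)$ converging weakly to $\mu \in \Prob^{\mathcal S}(\X,\m)$. Passing to a subsequence I may assume $L := \lim_n S_{N,\m}(\mu_n) < \infty$; otherwise there is nothing to prove. Then each $\mu_n$ is absolutely continuous, $\mu_n = \rho_n \m$, and $(\rho_n)_n$ is bounded in $L^q(\m)$. Since $\m$ is $\sigma$-finite and $q > 1$, the space $L^q(\m)$ is reflexive, so a further subsequence (not relabeled) satisfies $\rho_n \weakto \tilde\rho$ weakly in $L^q(\m)$ for some non-negative $\tilde\rho \in L^q(\m)$.

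The main step is to identify $\tilde\rho \m$ with $\mu$. Let $f \in C_b(\X)$ be any function whose support is compact and contained in $\X \setminus \mathcal S_\m$. Since $\m|_{\X \setminus \mathcal S_\m}$ is Radon, $\m(\supp(f)) < \infty$, and so $f \in L^{q'}(\m)$ with $q'$ the conjugate exponent of $q$. Combining the weak convergence of $(\mu_n)$ in $\Prob_2(\X)$ with the weak $L^q$-convergence of $(\rho_n)$,
$$\int f \, \de\mu = \lim_n \int f \, \de\mu_n = \lim_n \int f\rho_n \, \de\m = \int f\tilde\rho \, \de\m.$$
Both $\mu$ and $\tilde\rho \m$ are Radon on the Polish subspace $\X \setminus \mathcal S_\m$ (local finiteness of $\tilde\rho\m$ follows from H\"older's inequality applied to $\m|_{\X \setminus \mathcal S_\m}$), and they agree on continuous compactly supported functions there; combined with $\mu(\mathcal S_\m) = 0$ and $\m(\mathcal S_\m) = 0$, this forces $\mu = \tilde\rho \m$.

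The proof concludes with the weak lower semicontinuity of the $L^q$-norm:
$$S_{N,\m}(\mu) = \|\tilde\rho\|_{L^q(\m)}^q \le \liminf_n \|\rho_n\|_{L^q(\m)}^q = L.$$
The delicate point is the identification step: restricting the test functions to those whose supports avoid $\mathcal S_\m$ is forced on us, because otherwise $f \in C_b(\X)$ need not belong to $L^{q'}(\m)$; the hypothesis $\mu(\mathcal S_\m) = 0$ is precisely what makes such restricted test functions sufficient to pin down $\mu$.
\end{pr}
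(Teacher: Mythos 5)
The paper does not actually prove this proposition --- it is imported from \cite[Proposition 4.8]{MRS21} --- so your argument can only be judged on its own terms. The overall strategy (pass to a subsequence with bounded entropy, extract a weak $L^q$ limit $\tilde\rho$ of the densities with $q=(N-1)/N>1$, identify $\tilde\rho\m$ with $\mu$, conclude by weak lower semicontinuity of the $L^q$-norm) is sound, and you correctly isolate the identification of the limit measure as the one place where the hypothesis $\mu(\mathcal S_\m)=0$ enters.

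There is, however, a genuine gap in that identification step. You test against $f\in C_b(\X)$ with compact support contained in $\X\setminus\mathcal S_\m$, and then assert that two Radon measures on the Polish space $\X\setminus\mathcal S_\m$ which agree on all such $f$ must coincide. This is only true when the space is locally compact, which is not assumed: $\X$ is merely complete and separable, and on a Polish space that is not locally compact (an infinite-dimensional Banach space, say) every compact set has empty interior, so the only continuous function with compact support is $f\equiv 0$ and your test class determines nothing. The repair is easy and stays within your argument: drop compactness of the support and instead take $f\in C_b(\X)$ supported in an open set $U\subset\X\setminus\mathcal S_\m$ with $\m(U)<\infty$ --- such $f$ still belong to $L^{q'}(\m)$, which is all your displayed chain of equalities needs. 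Since $\m|_{\X\setminus\mathcal S_\m}$ is Radon, hence locally finite, and $\X$ is Lindel\"of, the set $\X\setminus\mathcal S_\m$ is covered by countably many such $U$; Lipschitz bump functions of the form $y\mapsto\max\bigl(0,1-k\,\di(y,C)\bigr)$, for closed sets $C$ with $\di(C,U^c)>0$, then identify $\mu$ and $\tilde\rho\m$ on each $U$, hence on $\X\setminus\mathcal S_\m$, and together with $\mu(\mathcal S_\m)=(\tilde\rho\m)(\mathcal S_\m)=0$ this yields $\mu=\tilde\rho\m$. With this modification the proof is complete.
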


Before going on, we define some useful subsets of $\Prob(\X)$ that will be relevant later on:
\begin{equation*}
\begin{split}
    \Prob_{ac}(\X,\m) &:= \{ \mu \in \ProbTwo(\X) : \mu \ll \m\},\\
    \Prob_\infty(\X,\m) &:=\{\mu \in \Prob_{ac}(\X,\m) : \text{$\mu$ has bounded support}\},\\
    \Prob^*_N (\X,\m) &:= \{ \mu \in \Prob_\infty(\X,\m ) : S_{N,\m}(\mu)<\infty\}.
\end{split}
\end{equation*}
Observe that, since $\m(\mathcal S_\m)=0$, it holds that $\Prob_{ac}(\X,\m)\subset \Prob^\mathcal S(\X,\m)$, thus in particular the entropy functional $S_{N,\m}$ is weakly lower semicontinuous on $\Prob_{ac}(\X,\m)$.

In order to give the definition of reduced curvature-dimension bounds for negative values of the dimensional parameter, we need also to introduce the following distortion coefficients for $N < 0$:
\begin{equation}\label{E:sigma}
\sigma_{K,N}^{(t)}(\theta):= 
\begin{cases}
\infty, & \textrm{if}\ K\theta^{2} \leq N\pi^{2}, \crcr
\displaystyle  \frac{\sin(t\theta\sqrt{K/N})}{\sin(\theta\sqrt{K/N})} & \textrm{if}\  N\pi^{2} < K\theta^{2} <  0, \crcr
t & \textrm{if}\ 
K \theta^{2}=0,  \crcr
\displaystyle   \frac{\sinh(t\theta\sqrt{-K/N})}{\sinh(\theta\sqrt{-K/N})} & \textrm{if}\ K\theta^{2} > 0
\end{cases}
\end{equation}
Notice that, for every $N<0$ and every $t\in[0,1]$, it holds that
\begin{equation}\label{eq:monotonicitysigma}
    \sigma_{K,N}^{(t)}(\theta) \text{ is increasing if $K\leq 0$ and decreasing if $K\geq0$.}
\end{equation}

\noindent We are then ready to define the notion of reduced curvature-dimension condition for negative values of the dimensional parameter $N$, which was introduced for the first time by Ohta in \cite{Ohta16}.

\begin{definition}
For any couple of measures $\mu_0,\mu_1 \in \mathscr{P}_{ac}(\X,\m)$, $\mu_i = \rho_i \m$ and any coupling $\pi\in \mathscr{P}(\X\times \X)$ between them, we denote by $R_{K,N}^{(t)}(\pi | \m)$  the functional defined by
\[
\begin{split}
R_{K,N}^{(t)}(\pi | \m)&:= \int_{\X\times \X} \Big[ \sigma^{(1-t)}_{K,N} \big(\di(x,y) \big) \rho_{0}(x)^{-\frac{1}{N}} +    \sigma^{(t)}_{K,N} \big(\di(x,y) \big) \rho_{1}(y)^{-\frac{1}{N}} \Big]   \de \pi(x,y).
\end{split}
\]
\end{definition}

\begin{definition}[$\CD^*$ condition]\label{def:curvcond}
For fixed  $K\in \R, N \in (-\infty,0)$, we say that a metric measure space $(\X, \di, \m)$ satisfies the  \emph{$\CD^*(K,N)$-condition}  if for each pair of measures $\mu_{0}=\rho_{0} \m,\, \mu_{1}=\rho_{1} \m \in \Prob_N^*(\X,\m)$  there exists an optimal coupling $\pi\in \OptPlans(\mu_0,\mu_1)$ and a $W_{2}$-geodesic  $\{\mu_t\}_{t \in [0, 1]} \subset \mathscr{P}_N^*(\X,\m)$ such that
\begin{align}\label{def:CD}
S_{N', \m}(\mu_t) \leq R_{K,N'}^{(t)}(\pi | \m)
\end{align}
holds for every $t\in [0,1]$ and every $N'\in [N, 0)$.
\end{definition}

\begin{remark}
We point out that Definition \ref{def:curvcond} is not exactly the same as the one adopted by Ohta in \cite{Ohta16}, where the condition is required to hold for every pair of absolutely continuous marginals. The main difference is basically that we ask the space $\Prob_N^*(\X,\m)$ to be geodesically convex even when inequality \eqref{def:CD} does not ensure it, as it may happen when $K<0$, due to the pathologies of the distortion coefficients. This is a technical detail but it is fundamental in this paper. On the other hand, let us point out that Definition \ref{def:curvcond} is consistent to the first definition of reduced $\CD$-condition, that was introduced by Bacher and Sturm in \cite{BacherSturm10} in the positive dimensional case.
\end{remark}

\noindent Finally we invite the reader to compare this reduced curvature-dimension condition $\CD^\ast(K, N)$ with the original $\CD(K, N)$ one (see \cite{Ohta16} and \cite{MRS21}).

\subsection{Essentially Non-Branching Metric Measure Spaces}

In this section we briefly introduce the essentially non-branching condition on a metric measure space, that was pioneered by Rajala and Sturm in \cite{RajalaSturm12}, proving also a result that will be fundamental in the following. This property is a weakening of the classical non-branching condition and it fits better into the context of metric measure spaces, since it takes into account also the reference measure. An example of these spaces is provided by Rajala and Sturm, who proved that every strong $\CD(K,\infty)$ space is essentially non-branching.

\begin{definition}
 In a metric space $(\X,\mathsf{d})$, a subset $G\subset \Geo(\X)$ is called non-branching if for any pair of geodesics $\gamma_1,\gamma_2\in G$ such that $\gamma_1\neq\gamma_2$, it holds that 
 \begin{equation*}
     \rest{0}{t} \gamma_1 \neq  \rest{0}{t} \gamma_1 \qquad \text{for every }t\in (0,1).
 \end{equation*}
 A metric measure space $(\X,\mathsf{d},\mathfrak{m})$ is said to be essentially non-branching if for every absolutely continuous measures $\mu_0,\mu_1\in\Prob_{ac}(\X,\m)$, every optimal geodesic plan $\eta$ connecting them is concentrated on a non-branching set of geodesics.
\end{definition}

The non-branching assumption for $(\X, \di)$ can be equivalently characterized by requiring that the map $(e_0, e_t) \colon \Geo(\X) \to \X^2$ is injective for some, and thus for any, $t \in (0, 1)$.

As one can intuitively realize, the essentially non-branching condition turns out to be useful in order to prevent mass overlap of two Wasserstein geodesics at any intermediate time. The following proposition provides an interesting result in this direction; similar statements can be find in (\cite[Lemma 2.8]{BacherSturm10} \cite[Lemma 3.11]{Erbar-Kuwada-Sturm13}), but, in our setting, we need a slightly more general result. The proof relies on a clever mixing argument, that was used for a different purpose in \cite{RajalaSturm12}. The same mixing procedure will be very useful in the proof of Proposition \ref{prop:ambrosiogigli}. 

\begin{prop}\label{prop:orthogonality}
Let $(\X, \di,\m)$ be an essentially non-branching metric measure space. Given the probability measures $\mu_0,\nu_0,\mu_1,\nu_1 \in \Prob_{ac}(\X,\m)$, consider two optimal geodesic plans $\ppi^\mu \in \OptGeo(\mu_0, \mu_1)$ and $\ppi^\nu \in \OptGeo(\nu_0, \nu_1)$, and their geodesic representation $(\mu_t)_{t \in [0,1]} = (e_t)_\# \ppi^\mu$ and $(\nu_t)_{t \in [0,1]}= (e_t)_\# \ppi^\nu$. Moreover, assume that $\ppi^\mu \perp \ppi^\nu$ and that there is an optimal transport plan $\pi \in \OptPlans$ such that
\[(e_0,e_1)_\# \ppi^\mu,(e_0,e_1)_\# \ppi^\nu \ll \pi.\]
Then for every $t \in (0,1)$ it holds $\mu_t \perp \nu_t$, provided that $\mu_t, \nu_t \ll \m$.
\end{prop}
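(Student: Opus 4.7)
The plan is to argue by contradiction via a swap-at-time-$t$ construction. Suppose for some $t \in (0,1)$ the measures $\mu_t$ and $\nu_t$ are not mutually singular; since both are absolutely continuous with respect to $\m$, there is a Borel set $B \subset \X$ of positive $\m$-measure on which both densities are strictly positive, so that $\alpha := \mu_t \wedge \nu_t$ is a nonzero measure with $\supp \alpha \subset B$.

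First I set up the averaged geodesic plan $\tilde\ppi := \frac{1}{2}(\ppi^\mu + \ppi^\nu)$. Its endpoint marginals $\frac{1}{2}(\mu_0+\nu_0)$ and $\frac{1}{2}(\mu_1+\nu_1)$ belong to $\Prob_{ac}(\X,\m)$, and $(e_0,e_1)_\# \tilde\ppi \ll \pi$ by hypothesis, so its support lies in the cyclically monotone set $\supp \pi$; hence $\tilde\ppi$ is an optimal geodesic plan between its marginals. Essentially non-branching then yields a non-branching set $G$ on which $\tilde\ppi$, and consequently $\ppi^\mu$ and $\ppi^\nu$, concentrate; applying the same reasoning to time-reversed plans makes $G$ also backward non-branching.

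Next I disintegrate $\ppi^\mu = \int \ppi^\mu_x\, d\mu_t(x)$ and $\ppi^\nu = \int \ppi^\nu_x\, d\nu_t(x)$ with respect to $e_t$, set $\ppi^\mu_\alpha := \int \ppi^\mu_x\, d\alpha(x) \leq \ppi^\mu$ and analogously $\ppi^\nu_\alpha$, and define two \emph{swapped} plans $\ppi^{\mathrm{sw},1}, \ppi^{\mathrm{sw},2}$ obtained by gluing, for $\alpha$-a.e.\ $x$, the first halves of geodesics in $\ppi^\mu_x$ with the second halves of those in $\ppi^\nu_x$ meeting at $x$ (and vice versa). The pivotal rigidity is that for any pair $(\gamma^\mu, \gamma^\nu) \in \supp\ppi^\mu_\alpha \times \supp\ppi^\nu_\alpha$ sharing a common time-$t$ value $x$, setting $L^\mu := \di(\gamma^\mu_0, \gamma^\mu_1)$ and $L^\nu := \di(\gamma^\nu_0, \gamma^\nu_1)$, cyclical monotonicity of $\supp \pi$ together with the triangle inequality through $x$ gives
\[
(L^\mu)^2 + (L^\nu)^2 \leq \di^2(\gamma^\mu_0, \gamma^\nu_1) + \di^2(\gamma^\nu_0, \gamma^\mu_1) \leq \bigl(tL^\mu + (1-t)L^\nu\bigr)^2 + \bigl(tL^\nu + (1-t)L^\mu\bigr)^2,
\]
and a direct expansion forces $(L^\mu - L^\nu)^2 \leq 0$, hence $L^\mu = L^\nu$ with equality in all triangle inequalities. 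The concatenations are therefore minimizing geodesics of the same length as the originals, so
\[
\tilde\ppi' := \tilde\ppi - \tfrac{1}{2}\ppi^\mu_\alpha - \tfrac{1}{2}\ppi^\nu_\alpha + \tfrac{1}{2}\ppi^{\mathrm{sw},1} + \tfrac{1}{2}\ppi^{\mathrm{sw},2}
\]
is a nonnegative geodesic plan with the same endpoint marginals and the same cost as $\tilde\ppi$ (the swapped halves preserve the time-$0$ and time-$1$ marginals, and the lengths of concatenated geodesics coincide with the originals), hence also optimal.

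Finally I form $\hat\ppi := \frac{1}{2}(\tilde\ppi + \tilde\ppi')$, an optimal geodesic plan with absolutely continuous marginals, which by essentially non-branching concentrates on a non-branching set $\hat G$ containing the supports of $\ppi^\mu, \ppi^\nu, \ppi^{\mathrm{sw},1}, \ppi^{\mathrm{sw},2}$. For a concatenated geodesic $\eta \in \supp \ppi^{\mathrm{sw},1}$ through $x$ at time $t$ there exist, by construction, $\gamma^\mu \in \supp\ppi^\mu$ and $\gamma^\nu \in \supp\ppi^\nu$ with $\gamma^\mu_t = \gamma^\nu_t = x$, $\rest{0}{t}\eta = \rest{0}{t}\gamma^\mu$, and $\rest{t}{1}\eta = \rest{t}{1}\gamma^\nu$. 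Forward non-branching of $\hat G$ forces $\eta = \gamma^\mu$, so $\gamma^\mu$ and $\gamma^\nu$ agree on $[t,1]$, and backward non-branching yields $\gamma^\mu = \gamma^\nu$, contradicting $\ppi^\mu \perp \ppi^\nu$. The main obstacle I anticipate is the rigidity argument in the third paragraph --- extracting from cyclical monotonicity and the triangle inequality the simultaneous conclusions $L^\mu = L^\nu$ and geodesicity of the concatenations --- together with the measure-theoretic bookkeeping needed to verify that $\tilde\ppi'$ is nonnegative and has matching marginals and cost.
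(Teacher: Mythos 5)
Your proof is correct and follows essentially the same route as the paper's: gluing left and right halves of geodesics at time $t$, the identical cyclical-monotonicity/triangle-inequality rigidity forcing equal lengths and geodesicity of the concatenations, optimality of the recombined plan, and forward/backward essential non-branching to reach the contradiction. The only (immaterial) difference is bookkeeping: the paper mixes the full plans via the product of the left/right disintegrations and concludes that the mixed plan or its time reversal must branch, whereas you swap only the common part $\alpha = \mu_t \wedge \nu_t$ and conclude that $\ppi^\mu$ and $\ppi^\nu$ would share a nonzero common piece.
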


\begin{pr}
The proof will be done by contradiction, thus let us assume that there exists $t\in (0,1)$ such that $\mu_t \not\perp \nu_t$. Consider the measures
\[
 \ppi^\text{left} = \frac12\left((\rest{0}{t})_\#\ppi^\mu + (\rest{0}{t})_\#\ppi^\nu\right), \qquad\ppi^\text{right} = \frac12\left((\rest{t}{1})_\#\ppi^\mu + (\rest{t}{1})_\#\ppi^\nu \right),
\]
then let $\{\ppi_x^\text{left}\}_{x\in X}$ be the disintegration of $\ppi^\text{left}$ with respect to $e_1$ and 
let $\{\ppi_x^\text{right}\}_{x\in X}$ be the disintegration of $\ppi^\text{right}$ with respect to $e_0$. Define also the measure 
\[\eta :=(e_0)_\# \ppi^\text{right} = (e_1)_\# \ppi^\text{left}= \frac{\mu_t+\nu_t}{2}= (e_t)_\# \bigg( \frac{\ppi^\mu+ \ppi^\nu}{2} \bigg).\]
Moreover let us consider the splitting map
\begin{align*}
    \text{Sp}:  C([0,1];\X) &\to \left\{(\gamma^1, \gamma^2) \in C([0,1];\X) \times C([0,1];\X) \,:\, \gamma_1^1 = \gamma_0^2\right\}\\
    \gamma &\mapsto (\rest{0}{t}\gamma, \rest{t}{1}\gamma),
\end{align*}
and notice that it is bi-Lipschitz, which in particular implies the existence of its (measurable) inverse 
\[\text{Sp}^{-1}:\left\{(\gamma^1, \gamma^2) \in C([0,1];\X) \times C([0,1];\X) \,:\, \gamma_1^1 = \gamma_0^2\right\} \to C([0,1];\X).\]
Now define the collection of measures $\{\ppi_x\}_{x \in \X} \subset \Prob(C([0,1];\X))$ as
\[
 \ppi_x : = (\text{Sp}^{-1})_\#(\ppi_x^\text{left} \times \ppi_x^\text{right})
\]
and finally introduce the ``mixed measure" $\ppi^\text{mix} \in \Prob(C([0,1];\X))$ as $$\ppi^\text{mix}(\dee \gamma)= \ppi_x(\dee \gamma)\eta(\dee x).$$
The next few passages of the proof are designed to prove that $\ppi^\text{mix}$ is an optimal geodesic plan. First of all notice that, since $\pi \in \OptPlans$, there exists a cyclically monotone set $\Gamma \subset \X \times \X$ with $\pi(\Gamma)=1$. Consider the set 
\begin{equation*}
    \tilde \Gamma = \{ \gamma \in \Geo(X) \suchthat (\gamma_0,\gamma_1) \in \Gamma\} = (e_0,e_1)^{-1} (\Gamma),
\end{equation*}
then, since $(e_0,e_1)_\# \ppi^\mu \ll \pi$, it holds 
\begin{equation*}
    \ppi^\mu (\tilde \Gamma)= \ppi^\mu \big((e_0,e_1)^{-1} (\Gamma)\big) = (e_0,e_1)_\# (\ppi^\mu) (\Gamma)=1,
\end{equation*}
and similarly $\ppi^\nu(\tilde \Gamma)=1$. Then for every pair of curves $\gamma_1,\gamma_2 \in \tilde\Gamma$ with $\gamma^1_t=\gamma^2_t$, the cyclical monotonicity, together with the triangular inequality, gives that
\begin{equation}\label{eq:chain}
\begin{split}
  \di^2(\gamma_0^1,\gamma_1^1) + \di^2(\gamma_0^2,\gamma_1^2)
 & \le \di^2(\gamma_0^1,\gamma_1^2) + \di^2(\gamma_0^2,\gamma_1^1)\\
 & \le \left(tl(\gamma^1) + (1-t)l(\gamma^2)\right)^2 + \left(tl(\gamma^2) + (1-t)l(\gamma^1)\right)^2\\
 & = l(\gamma^1)^2 + l(\gamma^2)^2 - 2t(1-t)\left(l(\gamma^1)-l(\gamma^2)\right)^2\\
 & \le l(\gamma^1)^2 + l(\gamma^2)^2 = \di^2(\gamma_0^1,\gamma_1^1) + \di^2(\gamma_0^2,\gamma_1^2),
\end{split}
\end{equation}
and so all the inequalities in the above chain \eqref{eq:chain} are equalities, in particular $l(\gamma^1) = l(\gamma^2)$. Thus, for every $x \in e_t(\tilde \Gamma)=:\tilde \Gamma_t$, there exists $l_x$ such that $l(\gamma)=l_x$, for every $\gamma \in \tilde \Gamma$ with $\gamma_t=x$. On the other hand, notice that 
\begin{equation*}
    \eta(\tilde \Gamma_t)= (e_t)_\# \bigg( \frac{\ppi^\mu+ \ppi^\nu}{2} \bigg) (e_t(\tilde \Gamma)) \geq \frac{\ppi^\mu+ \ppi^\nu}{2} (\tilde \Gamma)=1.
\end{equation*}
Moreover, using once again that $\ppi^\mu(\tilde \Gamma)=\ppi^\nu(\tilde \Gamma)=1$, follows that, for $\eta$-almost every $x\in X$, $\ppi_x^\text{left}$ is concentrated
on geodesics of type $\rest{0}{t} \gamma$ with $\gamma \in \tilde \Gamma$ and $\ppi_x^\text{right}$ is concentrated on geodesics of type $\rest{0}{t} \gamma$ with $\gamma \in \tilde \Gamma$. 
Therefore the measure $\ppi^\text{mix}$ is concentrated on the set
\[
\left\{\gamma \in C([0,1];X)\,:\, \text{there exist }\gamma^1,\gamma^2 \in \tilde \Gamma \text{ s.t. } \rest{0}{t}\gamma = \rest{0}{t}\gamma^1 \text{ and } \rest{t}{1}\gamma = \rest{t}{1}\gamma^2\right\},
\]
which, by the equalities in \eqref{eq:chain}, is a subset of $\Geo(X)$.\\
Furthermore, since $\ppi^\mu$, $\ppi^\nu$ and $\ppi^\text{mix}$ are concentrated on $\tilde \Gamma$, it holds that 
\begin{align*}
 \int_{\Geo(X)} \di^2(\gamma_0,\gamma_1)\de\bigg( \frac{\ppi^1+\ppi^2}2\bigg)(\gamma) 
  & = \int_{X} l_x^2\de \left[(e_t)_\#\bigg( \frac{\ppi^1+\ppi^2}2\bigg)\right](x)\\
  &= \int_X l_x^2 \de \eta(x)\\
  & = \int_{X} l_x^2\de((e_t)_\#\ppi^\text{mix})(x) =\int_{\Geo(X)}\di^2(\gamma_0,\gamma_1)\de\ppi^\text{mix}(\gamma).
\end{align*}
On the other hand $(e_0,e_1)_\#\left(\frac12(\ppi^\mu+\ppi^\nu)\right)$ is an optimal transport plan, while $(e_i)_\#\ppi^\text{mix} = (e_i)_\#\left(\frac12(\ppi^\mu+\ppi^\nu)\right)$ for $i = 0,1$, thus
the measure $\ppi^\text{mix}$ is an optimal geodesic plan.

Now, call $\rho^\mu_t, \rho^\nu_t$ the densities with respect to $\m$ of $\mu_t$ and $\nu_t$, respectively. 
Since $\mu_t \not\perp \nu_t$ there exists a set $E\subset X$ of positive $\m$-measure, where both $\rho^\mu_t$ and $\rho^\nu_t$ are strictly positive. Notice moreover that, for $\m$-almost every $x\in E$, at least one of the measures $\ppi_x^\text{left}$ and $\ppi_x^\text{right}$ is not a Dirac mass, because $\ppi^\mu \perp \ppi^\nu$. Therefore at least one between $\ppi^\text{mix}$ and its time-inverse $I_\#\ppi^\text{mix}$, defined via the mapping
\[
 I : \Geo(X) \to \Geo(X) \qquad \gamma \mapsto (\gamma' \colon [0,1] \to X \quad t \mapsto \gamma_{1-t}),
\]
is not concentrated in a non-branching set of geodesics. This contradicts the hypothesis of essentially non-branching.
\end{pr}

\section{Solution to Monge Problem}\label{section3}

In this section we investigate existence and/or uniqueness of an optimal transport map in essentially non-branching $\CD^*(K,N)$ spaces with $N < 0$. To this aim, we follow the approach proposed by Gigli in \cite{Gigli12a}, developing some technical results in order to apply it to our more general class of spaces. In particular, we start by stating and proving a uniqueness result when the marginals have finite entropy and bounded support, while at the end of the section we show an existence result for general marginals. 
For all these results the essentially non-branching assumption and  Proposition \ref{prop:orthogonality} play a central role, but we need also to add another preliminary proposition which is the extension of a result by Ambrosio and Gigli \cite[Proposition 2.16]{AmbrosioGigli11}. Notice that the proof of Proposition \ref{prop:ambrosiogigli} is similar to the one of Proposition \ref{prop:orthogonality}, as it relies on the same mixing argument.

\begin{prop} \label{prop:ambrosiogigli}
Let $(X,\di,\m)$ be an essentially non-branching metric measure space and let $(\mu_t)_{t\in[0,1]} \subset \ProbTwo(X)$ be a constant speed Wasserstein geodesic, such that $\mu_0,\mu_1\in \Prob_{ac}(\X,\m)$. Then for every $t \in (0,1)$ there exists only one optimal geodesic plan $\ppi^0 \in \OptGeo(\mu_0,\mu_t)$ and only one optimal geodesic plan $\ppi^1 \in \OptGeo(\mu_t,\mu_1)$ and they are both induced by a map from $\mu_t$.
\end{prop}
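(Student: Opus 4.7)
My plan is to adapt the Rajala--Sturm mixing construction used in Proposition \ref{prop:orthogonality}, but applied to a single optimal geodesic plan coupled with itself independently at time $t$. The key intermediate step is the following structural claim: for any $\Pi \in \OptGeo(\mu_0, \mu_1)$ (at least one exists because $(\mu_t)$ is a $W_2$-geodesic), the disintegration $\{\Pi_x\}_{x\in\X}$ of $\Pi$ with respect to $e_t$ is $\mu_t$-a.e.\ a Dirac mass. In other words, $\Pi$ is induced by a map from $\mu_t$.

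To prove the claim, I would introduce the ``independent'' plan
\[
\Pi^{\text{ind}} := \int_{\X} (\text{Sp}^{-1})_\# \bigl((\rest{0}{t})_\#\Pi_x \otimes (\rest{t}{1})_\#\Pi_x\bigr) \, \de\mu_t(x),
\]
obtained by forgetting the joint coupling at $e_t$ in $\Pi_x$ and replacing it by the product of its one-time marginals. Combining cyclical monotonicity of $(e_0,e_1)_\#\Pi$ with the triangle inequality --- exactly as in the chain \eqref{eq:chain} --- forces any two geodesics in $\supp(\Pi)$ meeting at time $t$ at the same point $x$ to have the same length $l_x$, so every concatenation appearing in $\Pi^{\text{ind}}$ is still a geodesic of length $l_x$ and therefore $\Pi^{\text{ind}} \in \OptGeo(\mu_0,\mu_1)$. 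Since $\mu_0,\mu_1 \in \Prob_{ac}(\X,\m)$, the essentially non-branching assumption can be applied both to $\Pi^{\text{ind}}$ and to the time-reversed plan $I_\# \Pi^{\text{ind}}$, so $\Pi^{\text{ind}}$ is concentrated on a set of geodesics which is non-branching in both time directions. Now, if $\Pi_x$ failed to be a Dirac on a $\mu_t$-positive set, one could pick $x$ and two distinct $\gamma^1, \gamma^2 \in \supp(\Pi_x)$ with $\gamma^1_t = \gamma^2_t = x$, and the swapped geodesic $\tilde\gamma$ with first half $\rest{0}{t}\gamma^1$ and second half $\rest{t}{1}\gamma^2$ would lie in $\supp(\Pi^{\text{ind}})$: either $\tilde\gamma = \gamma^1$, forcing $\rest{t}{1}\gamma^1 = \rest{t}{1}\gamma^2$ and therefore $\gamma^1 = \gamma^2$ by backward non-branching, or $\tilde\gamma \neq \gamma^1$ while sharing its initial segment, in contradiction with forward non-branching.

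From the claim the conclusions follow easily. Writing $\Pi = T_\# \mu_t$ with $e_t \circ T = \id$, the plans $\ppi^0 := (\rest{0}{t})_\#\Pi$ and $\ppi^1 := (\rest{t}{1})_\#\Pi$ are induced by the maps $\rest{0}{t}\circ T$ and $\rest{t}{1}\circ T$ from $\mu_t$. For uniqueness of $\ppi^0$, given any other candidate $\tilde\ppi^0 \in \OptGeo(\mu_0,\mu_t)$, I would glue it with $\ppi^1$ via $(\text{Sp}^{-1})_\#$ and disintegration along $\mu_t$ to obtain $\tilde\Pi \in \OptGeo(\mu_0,\mu_1)$; applying the structural claim to $\tfrac12(\Pi+\tilde\Pi)\in\OptGeo(\mu_0,\mu_1)$ forces the two underlying maps from $\mu_t$ to agree $\mu_t$-a.e., whence $\Pi=\tilde\Pi$ and $\ppi^0=\tilde\ppi^0$. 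The argument for $\ppi^1$ is symmetric. The main technical hurdle is the measure-theoretic book-keeping that turns the informal ``two distinct geodesics through $x$'' heuristic into an honest contradiction with $\Pi^{\text{ind}}$ being concentrated on a non-branching set of geodesics --- exactly the step that made the proof of Proposition \ref{prop:orthogonality} delicate; notice also that the absolute continuity of the intermediate measure $\mu_t$ is never used, while that of $\mu_0$ and $\mu_1$ enters only through the essentially non-branching hypothesis.
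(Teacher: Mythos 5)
Your proposal is correct and follows essentially the same route as the paper: both glue plans at time $t$ via $(\text{Sp}^{-1})_\#$ and the disintegration along $\mu_t$, verify optimality of the recoupled plan through the triangle-inequality/cyclical-monotonicity chain, and then invoke essential non-branching (together with its time-reversed version) to force the disintegrations at $\mu_t$ to be Dirac masses. The only cosmetic differences are that you phrase the key step as a structural claim about full plans in $\OptGeo(\mu_0,\mu_1)$ and restrict afterwards, and that you spell out the uniqueness-by-convexity step, which the paper leaves implicit.
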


\begin{pr}
Pick $\ppi_0 \in \OptGeo(\mu_0,\mu_t)$ and $\ppi_1 \in \OptGeo(\mu_t,\mu_1)$. Hence let $\{ \ppi^0_x \}_{x\in X}$ be the disintegration of $\ppi^0$ with respect to $e_1$ and let $\{ \ppi^1_x \}_{x\in X}$ be the disintegration of $\ppi^1$ with respect to $e_0$. Furthermore (similarly to what was done in the proof of Proposition \ref{prop:orthogonality}), consider the splitting map 
\begin{align*}
    \text{Sp}:  C([0,1];\X) &\to \left\{(\gamma^1, \gamma^2) \in C([0,1];\X) \times C([0,1];\X) \,:\, \gamma_1^1 = \gamma_0^2\right\}\\
    \gamma &\mapsto (\rest{0}{t}\gamma, \rest{t}{1}\gamma).
\end{align*}
and its (measurable) inverse $$\text{Sp}^{-1}:\left\{(\gamma^1, \gamma^2) \in C([0,1];\X) \times C([0,1];\X) \,:\, \gamma_1^1 = \gamma_0^2\right\} \to C([0,1];\X).$$ Now define the collection of measures $\{\ppi_x\}_{x \in \X} \subset \Prob(C([0,1];\X))$ as
\[
 \ppi_x : = (\text{Sp}^{-1})_\#(\ppi_x^0 \times \ppi_x^1),
\]
and the measure $\ppi \in \Prob(C([0,1];X))$ as \[\ppi(\dee \gamma)= \ppi_x(\dee \gamma)\mu_t(\dee x).\]
In particular observe that, since $(e_1)_\# \ppi^0=(e_0)_\# \ppi^1=\mu_t$, it holds that
\begin{equation} \label{eq:restriction}
    (\res_0^t)_\# \ppi = \ppi^0 \qquad \text{and} \qquad (\res_t^1)_\# \ppi=\ppi^1.
\end{equation}
Set $\alpha:= (e_0,e_t,e_1)_\# \ppi \in \Prob(X^3)$. Now, using \eqref{eq:restriction}, it follows that 
\begin{equation}\label{eq:chain2}
\begin{split}
    \norm{\di(\p_1,\p_3)}_{L^2(\alpha)} &\leq \norm{\di (\p_1,\p_2) + \di (\p_2,\p_3)}_{L^2(\alpha)} \leq \norm{\di(\p_1,\p_2)}_{L^2(\alpha)} + \norm{\di(\p_2,\p_3)}_{L^2(\alpha)} \\
    & = \norm{\di(e_0,e_1)}_{L^2(\ppi^0)} + \norm{\di(e_0,e_1)}_{L^2(\ppi^1)}\\
    &= W_2(\mu_0,\mu_t) + W_2(\mu_t,\mu_1) = W_2(\mu_0,\mu_1),
\end{split}
\end{equation}
thus $(\p_1,\p_3)_\# \alpha=(e_0,e_1)_\# \ppi$ is an optimal transport plan. In particular, the first inequality in the chain \eqref{eq:chain2} must be an equality: this ensures that $\di(x,z)=\di(x,y)+\di(y,z)$ for $\alpha$-almost every $(x,y,z)$, which means that $x,y,z$ lie along a geodesic. Furthermore, since also the second inequality has to be an equality, the functions $(x,y,z)\mapsto\di(x,y)$ and $(x,y,z)\mapsto\di(y,z)$ are each a positive multiple of the other, for $\alpha$-almost every $(x,y,z)$. Thus, it follows that for $\alpha$-almost every $(x,y,z)$ it holds 
\begin{equation*}
    \di(x,y)= t \di (x,z) \qquad \text{and} \qquad \di(y,z)=(1-t)\di(x,z).
\end{equation*}
This last information, together with \eqref{eq:restriction}, ensures that $\ppi$ is concentrated on $\Geo(X)$. Moreover $(e_0,e_1)_\#\ppi$ is an optimal transport plan and so $\ppi$ is an optimal geodesic plan.

We are now going to prove that $\ppi^1$ is induced by a map, the proof for $\ppi^0$ is totally analogous. Notice that the essentially non-branching assumption guarantees that $\ppi^1_x$ is a Dirac mass for $\mu_t$-almost every $x\in X$, since otherwise $\ppi \in \OptGeo(\mu_0,\mu_1)$ will not be concentrated in a non-branching set of geodesic. In addition, the map $x \mapsto \ppi_x^1$ is Borel measurable, therefore there exists a Borel measurable map $T:X\to\Geo(X)$ such that $\ppi_x^1=\delta_{T(x)}$ for $\mu_t$-almost every $x\in X$. In particular $\ppi^1=T_\#\mu_t$ and this is sufficient to conclude the proof.
\end{pr}

\noindent Let us then present the following corollary which is an easy consequence of Proposition \ref{prop:ambrosiogigli}.

\begin{corollary}\label{lem:gigli1}
Let $(X,\di,\m)$ be an essentially non-branching $\CD^*(K,N)$ space, for some $K\in \R$ and $N<0$, and let $\ppi \in \OptGeo(\mu_0,\mu_1)$ be such that
\begin{equation} \label{eq:finiteness}
   \mu_t= \rho_t \m:= (e_t)_\# \ppi \in \Prob^*_N(\X,\m) \text{ for every } t\in[0,1].
\end{equation}
Then it holds that
\begin{equation*}
    \lim_{s\to 0} S_{N,\m}( \mu_s) = S_{N,\m} ( \mu_0).
\end{equation*}
\end{corollary}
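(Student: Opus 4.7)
The strategy is to bracket $\lim_{s\to 0}S_{N,\m}(\mu_s)$ between a $\liminf$-bound coming from lower semicontinuity and a $\limsup$-bound coming from the $\CD^*(K,N)$ inequality applied to a convenient pair of marginals.

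Since $\mu_s \in \Prob_{ac}(\X,\m) \subset \Prob^{\mathcal S}(\X,\m)$ for every $s \in [0,1]$ and $\mu_s \to \mu_0$ in $W_2$ as $s\to 0^+$, Proposition \ref{prop:lsc} yields immediately that $S_{N,\m}(\mu_0) \le \liminf_{s\to 0^+} S_{N,\m}(\mu_s)$.

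For the matching $\limsup$-bound, I would fix some $t_0 \in (0,1)$ and apply the $\CD^*(K,N)$ condition to the pair $(\mu_0,\mu_{t_0})$, both of which lie in $\Prob_N^*(\X,\m)$ by hypothesis. This produces an optimal coupling $\pi_0 \in \OptPlans(\mu_0,\mu_{t_0})$ and a Wasserstein geodesic $(\nu_u)_{u\in[0,1]} \subset \Prob_N^*(\X,\m)$ satisfying
\[
S_{N,\m}(\nu_u) \le R_{K,N}^{(u)}(\pi_0 | \m), \qquad u \in [0,1].
\]
The crucial step is the identification $\nu_u = \mu_{t_0 u}$: both $(\nu_u)_{u\in[0,1]}$ and $(\mu_{t_0 u})_{u\in[0,1]}$ are constant-speed Wasserstein geodesics joining $\mu_0$ to $\mu_{t_0}$, and each lifts to an element of $\OptGeo(\mu_0,\mu_{t_0})$. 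Proposition \ref{prop:ambrosiogigli}, applied to the original geodesic $(\mu_t)_{t\in[0,1]}$ (whose endpoints are absolutely continuous), states that this set is a singleton, so the two lifts, and hence the two geodesics, must coincide. Setting $u = s/t_0$ for $s \in (0,t_0)$ then gives
\[
S_{N,\m}(\mu_s) \le R_{K,N}^{(s/t_0)}(\pi_0 | \m).
\]

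To conclude, I would send $s \to 0^+$ in the right-hand side. Since $\sigma_{K,N}^{(1-u)}(\theta) \to 1$ and $\sigma_{K,N}^{(u)}(\theta) \to 0$ pointwise as $u \to 0^+$, and since $\mu_0, \mu_{t_0} \in \Prob_\infty(\X,\m)$ forces $\pi_0$ to have bounded support so that the distortion coefficients remain uniformly bounded on $\supp(\pi_0)$, while $\rho_0^{-1/N}$ and $\rho_{t_0}^{-1/N}$ are $\pi_0$-integrable by the finiteness of $S_{N,\m}(\mu_0)$ and $S_{N,\m}(\mu_{t_0})$, dominated convergence yields
\[
\lim_{s\to 0^+} R_{K,N}^{(s/t_0)}(\pi_0 | \m) = R_{K,N}^{(0)}(\pi_0 | \m) = \int \rho_0(x)^{-1/N}\,d\mu_0(x) = S_{N,\m}(\mu_0),
\]
which closes the argument. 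The heart of the proof is the identification $\nu_u = \mu_{t_0 u}$ via Proposition \ref{prop:ambrosiogigli}: without this uniqueness, the geodesic produced by the $\CD^*$ condition could be unrelated to our specific $(\mu_t)$ and its entropy bound would carry no information about $\mu_s$; once the identification is in hand, only routine dominated convergence is needed.
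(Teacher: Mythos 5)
Your strategy coincides with the paper's: the $\liminf$ bound from Proposition \ref{prop:lsc}, and for the $\limsup$ bound the use of Proposition \ref{prop:ambrosiogigli} to identify the geodesic produced by the $\CD^*$ condition between $\mu_0$ and $\mu_{t_0}$ with the restriction of the given one (the paper phrases this as ``\eqref{def:CD} must be true along $(\res_0^r)_\#\ppi$''), followed by dominated convergence as $u\to 0$. So the architecture is right and the identification step, which you correctly single out as the heart of the matter, is exactly the paper's.

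There is, however, a gap in your dominated-convergence step when $K<0$. You assert that since $\mu_0,\mu_{t_0}\in\Prob_\infty(\X,\m)$ the coupling $\pi_0$ has bounded support and hence ``the distortion coefficients remain uniformly bounded on $\supp(\pi_0)$''. This is false for $K<0$: by \eqref{E:sigma}, $\sigma_{K,N}^{(u)}(\theta)=+\infty$ whenever $\theta\ge\pi\sqrt{N/K}$, and $\sigma_{K,N}^{(u)}(\theta)=\sin(u\theta\sqrt{K/N})/\sin(\theta\sqrt{K/N})$ blows up as $\theta\uparrow\pi\sqrt{N/K}$; boundedness of $\di(x,y)$ on $\supp(\pi_0)$ gives no control. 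For an arbitrary choice of $t_0\in(0,1)$ the right-hand side $R^{(u)}_{K,N}(\pi_0\,|\,\m)$ can therefore be identically $+\infty$, and your $\limsup$ bound becomes vacuous. The fix is the one the paper makes explicit: choose $t_0$ \emph{small}. Since the relevant coupling is $(e_0,e_{t_0})_\#\ppi$, it is concentrated on pairs $(\gamma_0,\gamma_{t_0})$ with $\di(\gamma_0,\gamma_{t_0})=t_0\,\di(\gamma_0,\gamma_1)\le t_0 D$, where $D<\infty$ bounds the lengths of the geodesics in $\supp(\ppi)$; taking $t_0 D<\pi\sqrt{N/K}$ (in fact strictly below, and bounded away from it) makes the coefficients uniformly bounded on $\supp(\pi_0)$ and uniformly in $u$, so that a constant multiple of $\rho_0(x)^{-1/N}+\rho_{t_0}(y)^{-1/N}\in L^1(\pi_0)$ dominates and your computation goes through. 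For $K\ge 0$ your argument is complete as written, since then $\sigma_{K,N}^{(u)}\le 1$.
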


\begin{pr}
Observe that Proposition \ref{prop:ambrosiogigli} guarantees that, for a fixed $r \in (0,1)$, there exists a unique optimal geodesic plan between $ \mu_0$ and $ \mu_r$, which is then $(\res_0^r)_\# \ppi$. Moreover notice that when $K<0$, since $\mu_0$ and $\mu_1$ have bounded support, we can find $r\in (0,1)$ such that 
\begin{equation*}
    \sup_{x\in \supp (\mu_0),\, y \in \supp(\mu_r)} \di (x,y) < \pi \sqrt{\frac N K }.
\end{equation*}
As a consequence of this uniqueness, \eqref{def:CD} must be true along $(\res_0^r)_\#  \ppi$, therefore for a suitable $q\in \mathsf{Opt}(\mu_0,\mu_r)$ it holds
\begin{equation*}
S_{N,\m}(\mu_{tr})\leq\int \Big[ \sigma^{(1-t)}_{K,N} \big(\di(x,y) \big) \rho_{0}(x)^{-\frac{1}{N}} +    \sigma^{(t)}_{K,N} \big(\di(x,y) \big) \rho_{r}(y)^{-\frac{1}{N}} \Big]   \de q (x,y).
\end{equation*}
Now, letting $t \to 0$, the following pointwise convergences holds
\begin{equation*}
    \sigma^{(1-t)}_{K,N} \big(\di(x,y) \big) \to 1  \quad \text{and} \quad \sigma^{(t)}_{K,N} \big(\di(x,y) \big)\to 0,
\end{equation*}
and applying the dominated convergence theorem we conclude that $\limsup_{s \to 0} S_{N,\m}( \mu_s) \leq  S_{N,\m}( \mu_0) $. On the other hand, since $\mu_0$ is in particular absolutely continuous with respect to $\m$, the lower semicontinuity of the entropy functional $S_{N,\m}$, proven in Proposition \ref{prop:lsc}, allows to deduce that $S_{N,\m}( \mu_s) \to S_{N,\m} ( \mu_0)$ as $s \to 0$.
\end{pr}

\noindent Notice that, refining the proof of Corollary \ref{lem:gigli1}, it is possible to prove that for an essentially non-branching $\CD^*(K,N)$ space $(\X,\di,\m)$, the entropy functional is continuous along every Wasserstein geodesic with domain in $\Prob^*_N(\X,\m)$. 
Another nice consequence of this last result is stated in the following lemma, which will represent a key element for the proof of the main theorem (Theorem \ref{thm:monge}).
\begin{lemma}\label{lem:gigli2}
Let $(X,\di,\m)$ be an essentially non-branching $\CD^*(K, N)$ space for some $K\in \R$ and $N<0$ and assume $\ppi \in \OptGeo(\mu_0,\mu_1)$ to be such that
\begin{equation} 
   \mu_t:= (e_t)_\# \ppi \in \Prob^*_N(\X,\m) \text{ for every } t\in[0,1].
\end{equation}
Assume also that there exists a set $B \subset X \setminus S$, with $\m(B)<\infty$, such that 
\begin{equation*}
    \supp(\mu_t)\subset B \qquad \text{for every $t\in [0,1]$}.
\end{equation*}
Call $\rho_t$ the density of $\mu_t$ with respect to the reference measure $\m$, that is $\mu_t=\rho_t\m$, then 
\begin{equation*}
    \m(\{\rho_0>0\}) \leq \liminf_{t\to 0} \m(\{\rho_t>0\}).
\end{equation*}
\end{lemma}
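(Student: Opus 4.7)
The plan is to upgrade the Wasserstein convergence $\mu_t\to\mu_0$ (which holds because $W_2(\mu_t,\mu_0)=tW_2(\mu_0,\mu_1)\to 0$) to strong $L^p$-convergence of the densities $\rho_t\to\rho_0$ on $B$, where $p:=(N-1)/N$. Since $N<0$ we have $p>1$, so $L^p(B,\m)$ is uniformly convex, and this upgrade will be available through the Radon--Riesz theorem, provided we also know the norms converge. The latter is exactly the content of Corollary~\ref{lem:gigli1}, since $\|\rho_t\|_p^p=S_{N,\m}(\mu_t)\to S_{N,\m}(\mu_0)=\|\rho_0\|_p^p$. Once strong $L^p$-convergence is obtained, a subsequence converges $\m$-a.e.\ on $B$, and Fatou's lemma applied to $\mathbf 1_{\{\rho_t>0\}}$ on the finite-measure set $B$ yields the conclusion.

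Concretely, I would first observe that $\rho_t$ vanishes outside $B$ and that $\|\rho_t\|_{L^p(B,\m)}^p=S_{N,\m}(\mu_t)$ stays uniformly bounded as $t\to 0$, by Corollary~\ref{lem:gigli1}. Reflexivity of $L^p(B,\m)$ then ensures that every subsequence of $(\rho_t)$ admits a further subsequence converging weakly in $L^p(B,\m)$ to some limit $g$. To identify $g=\rho_0$, I would test against $\phi\in C_b(\X)$: since $\m(B)<\infty$, $\phi|_B\in L^q(B,\m)$ where $q=1-N$ is the conjugate exponent, and Wasserstein convergence gives $\int_B\phi\rho_t\,\de\m=\int\phi\,\de\mu_t\to\int\phi\,\de\mu_0=\int_B\phi\rho_0\,\de\m$. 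The inclusion $B\subset\X\setminus\mathcal S_\m$ guarantees that $\m|_B$ is a genuine finite Radon measure, so $C_b(\X)|_B$ is dense in $L^q(B,\m)$; a standard $\varepsilon/3$ argument identifies $g=\rho_0$, and uniqueness of the weak limit promotes the convergence to the whole family $\rho_t\rightharpoonup\rho_0$ in $L^p(B,\m)$.

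With weak convergence in hand and with the norms $\|\rho_t\|_p$ converging to $\|\rho_0\|_p$ by Corollary~\ref{lem:gigli1}, the Radon--Riesz theorem yields $\rho_t\to\rho_0$ strongly in $L^p(B,\m)$. To conclude I would pick a sequence $t_n\to 0$ realizing $\liminf_{t\to 0}\m(\{\rho_t>0\})$, extract a sub-subsequence along which $\rho_{t_{n_k}}\to\rho_0$ pointwise $\m$-a.e.\ on $B$, and note that at every such point $\mathbf 1_{\{\rho_0>0\}}\le\liminf_k\mathbf 1_{\{\rho_{t_{n_k}}>0\}}$. Fatou's lemma on the finite-measure set $B$ then gives
\[
\m(\{\rho_0>0\})\le\liminf_k\m(\{\rho_{t_{n_k}}>0\})\le\liminf_{t\to 0}\m(\{\rho_t>0\}),
\]
which is the claim.

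I expect the main delicate step to be the identification of the weak $L^p$-limit, which crucially exploits the assumption $B\subset\X\setminus\mathcal S_\m$: precisely this is what prevents the quasi-Radon pathologies at the singular set from obstructing the density of $C_b(\X)|_B$ in $L^q(B,\m)$ and hence the standard duality-based identification of the limit. Everything else — uniform boundedness of $\rho_t$ in $L^p$, the Radon--Riesz upgrade from weak plus norm convergence to strong, and Fatou on indicator functions — is then essentially routine.
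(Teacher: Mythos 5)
Your argument is correct, and it reaches the conclusion by a genuinely different route from the paper. The paper's proof never upgrades the convergence of the densities: it fixes $\varepsilon>0$, selects $A_\varepsilon\subset\{\rho_0>0\}$ on which $\rho_0$ is pinched between two positive constants, restricts and reweights $\ppi$ so that the new initial marginal is the \emph{uniform} measure $\m(A_\varepsilon)^{-1}\m|_{A_\varepsilon}$, and then applies Jensen's inequality to get $S_{N,\m}(\tilde\mu_t)\ge \m(\{\tilde\rho_t>0\})^{1/N}$, with equality at $t=0$ precisely because the renormalized density is constant. Corollary \ref{lem:gigli1}, applied to the restricted plan, then forces $\liminf_{t\to0}\m(\{\tilde\rho_t>0\})\ge\m(A_\varepsilon)>\m(\{\rho_0>0\})-\varepsilon$. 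Your route instead applies Corollary \ref{lem:gigli1} directly to $\ppi$ to get convergence of the $L^p$-norms ($p=1-1/N>1$), identifies the weak $L^p(B,\m)$-limit of $\rho_t$ as $\rho_0$ by duality, invokes Radon--Riesz to upgrade to strong convergence, and finishes with a.e.\ convergence along a subsequence plus Fatou. This yields a strictly stronger intermediate statement (strong $L^p$-convergence of the densities as $t\to0$) at the cost of more functional-analytic machinery; the paper's restriction-plus-Jensen argument is more elementary but needs the extra step of verifying that the reweighted plan still satisfies the hypotheses of Corollary \ref{lem:gigli1}. Two small remarks: the density of $C_b(\X)|_B$ in $L^q(B,\m)$ follows already from $\m(B)<\infty$ (a finite Borel measure on a separable metric space is regular), so the inclusion $B\subset\X\setminus\mathcal S_\m$ is not the decisive point there — what both proofs really use is the finiteness of $\m(B)$; and Fatou for the nonnegative indicators does not require finite measure, so that mention is harmless but superfluous.
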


\begin{pr}
Observe that, since $\mu_0\ll\m$, the set $\{\rho_0>0\}$ has positive $\m$ measure. Moreover $\{\rho_0>0\}\subset B$ up to a $\m$-null set, thus $0<\m(\{\rho_0>0\})<\infty$. So fix $\epsilon>0$ sufficiently small and take a Borel set $A_\epsilon\subset \{\rho_0>0\}$ such that $\m(\{\rho_0>0\})-\m(A_\epsilon)<\epsilon$ and $c<\rho_0(x)< C$ for every $x \in A_\epsilon$ and suitable constants $c$ and $C$. Now define the set $\mathcal{A}_\epsilon:= (e_0)^{-1}(A_\epsilon)$ and consequently the measures 
$\ppi',\ppi'' \in \Prob(\Geo(\X))$ as
\begin{equation*}
    \ppi':= \frac{1}{\ppi(\mathcal A_\epsilon)} \restr{\ppi}{\mathcal{A}_\epsilon} \quad \text{and} \quad \dee \ppi''(\gamma) = \frac{1}{\m(A_\epsilon)\rho_0(\gamma_0)} \dee \ppi' (\gamma).
\end{equation*}
By construction $\ppi''\ll\ppi$ with bounded density, thus if follows that $\tilde \mu_t=\tilde\rho_t \m := (e_t)_\# \ppi''\in \Prob^*_N(\X)$ for every $t\in [0,1]$ and $ \ppi'' \in \OptGeo(\tilde \mu_0,\tilde \mu_1)$. Furthermore, it is easy to realize that $\tilde \mu_0= \m(A_\epsilon)^{-1} \restr{\m}{A_\epsilon}$ and $\m(\{\tilde\rho_t>0\})\leq \m(\{\rho_t>0\})$ for every $t\in[0,1]$. Once again $\{\tilde\rho_t>0\}\subset B$ up to a $\m$-null set, therefore $\m(\{\tilde\rho_t>0\})<\infty$ and consequently applying Jensen's inequality we can deduce that
\begin{align*}
    S_{N,\m}(\tilde\mu_t) &= \int \tilde \rho_t^{1-\frac1N} \de \m = \int_{\{\tilde\rho_t>0\}} \tilde \rho_t^{1-\frac1N} \de \m = \m(\{\tilde\rho_t>0\}) \int \tilde \rho_t^{1-\frac1N}  \de \bigg[ \frac{\restr{\m}{\{\tilde\rho_t>0\}}}{\m(\{\tilde\rho_t>0\})}\bigg] \\
    & \geq \m(\{\tilde\rho_t>0\}) \left(\int \tilde \rho_t  \de \bigg[ \frac{\restr{\m}{\{\tilde\rho_t>0\}}}{\m(\{\tilde\rho_t>0\})}\bigg] \right)^{1-\frac1N}=\m(\{\tilde\rho_t>0\})^\frac1N
\end{align*}
It is then possible to apply Corollary \ref{lem:gigli1} to $\ppi''$ and deduce that
\begin{equation*}
    \lim_{t \to 0} S_{N,\m}(\tilde \mu_t)= S_{N,\m} (\tilde \mu_0)=  (\m(A_\epsilon))^\frac{1}{N}.
\end{equation*}
Consequently it holds that 
\begin{equation*}
    \m(\{\rho_0>0\} ) - \epsilon< \m(A_\epsilon) \leq \liminf_{t\to 0} \m(\{\tilde\rho_t>0\} )\leq \liminf_{t\to 0} \m(\{\rho_t>0\} ).
\end{equation*}
The thesis follows from the arbitrariness of $\epsilon$.
\end{pr}

\noindent We are now ready to prove the main result of this section, which provides the uniqueness of Wasserstein geodesics in $\Prob^*_N(\X,\m)$. 

\begin{theorem}\label{thm:monge}
Let $(X,\di,\m)$ be an essentially non-branching $\CD^*(K,N)$ space, for some $K\in \R$ and $N<0$. Then, for every $\mu_0,\mu_1\in\Prob^*_N(\X,\m)$ there exists a unique $\ppi\in\OptGeo(\mu_0,\mu_1)$ which satisfies 
\begin{equation}\label{eq:KNconvexity}
 \mu_t:= (e_t)_\# \ppi \in \Prob_N^*(\X,\m) \text{ for every } t\in[0,1],
\end{equation} 
and it is induced by a map.
\end{theorem}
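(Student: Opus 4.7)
The plan is to establish existence, the ``induced by a map'' property, and uniqueness in this order. \textbf{Existence} is immediate from Definition \ref{def:curvcond}: the $\CD^*(K,N)$-condition directly furnishes an optimal geodesic plan $\ppi\in\OptGeo(\mu_0,\mu_1)$ whose $t$-marginals all lie in $\Prob^*_N(\X,\m)$. The real work is to show that any such $\ppi$ is forced to be induced by a map from $\mu_0$, as uniqueness will then follow.

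I first show, by contradiction, that a candidate $\ppi$ satisfying \eqref{eq:KNconvexity} must be induced by a map from $\mu_0$. Suppose not: the disintegration $\{\ppi_x\}_{x\in\X}$ of $\ppi$ with respect to $e_0$ fails to consist of Dirac masses on a Borel set $E\subset\X$ with $\mu_0(E)>0$. Picking a countable basis of the Polish space $\Geo(\X)$, a standard measurable selection argument produces a Borel set $A\subset\Geo(\X)$ and a Borel subset $E'\subset E$ of positive $\m$-measure (on which the density $\rho_0$ of $\mu_0$ w.r.t.\ $\m$ is strictly positive) such that $\ppi_x(A),\ppi_x(\Geo(\X)\setminus A)\in(0,1)$ for every $x\in E'$. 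Set $\ppi^{(1)}:=\restr{\ppi}{A}$ and $\ppi^{(2)}:=\restr{\ppi}{\Geo(\X)\setminus A}$, let $a_i>0$ be their total masses (so $a_1+a_2=1$), and write $\eta^{(i)}:=\ppi^{(i)}/a_i$. Each $\eta^{(i)}$ is an optimal geodesic plan between its own absolutely continuous marginals, $\eta^{(1)}\perp\eta^{(2)}$ by construction, and $(e_0,e_1)_\#\eta^{(i)}\ll(e_0,e_1)_\#\ppi\in\OptPlans(\mu_0,\mu_1)$. Proposition \ref{prop:orthogonality} then yields $(e_t)_\#\eta^{(1)}\perp(e_t)_\#\eta^{(2)}$ for every $t\in(0,1)$, the required absolute continuity coming from $(e_t)_\#\eta^{(i)}\leq\mu_t/a_i\ll\m$.

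Writing $\mu_t=\rho_t\m$ and $\mu^{(i)}_t:=(e_t)_\#\eta^{(i)}=\rho^{(i)}_t\m$, and setting $p:=(N-1)/N>1$, the disjointness of supports at interior times gives $\rho^{(1)}_t\rho^{(2)}_t=0$ pointwise, hence
\[
S_{N,\m}(\mu_t)=a_1^{p}\,S_{N,\m}(\mu^{(1)}_t)+a_2^{p}\,S_{N,\m}(\mu^{(2)}_t)\qquad\text{for every }t\in(0,1).
\]
Each $\eta^{(i)}$ has all $t$-marginals in $\Prob^*_N(\X,\m)$ (since $\rho^{(i)}_t\leq\rho_t/a_i$), so Corollary \ref{lem:gigli1} applied to $\ppi,\eta^{(1)},\eta^{(2)}$ lets one pass to the limit $t\downarrow0$ to obtain
\[
S_{N,\m}(\mu_0)=a_1^{p}\,S_{N,\m}(\mu^{(1)}_0)+a_2^{p}\,S_{N,\m}(\mu^{(2)}_0).
\]
On the other hand, both $\rho^{(1)}_0$ and $\rho^{(2)}_0$ are strictly positive on $E'$ (with $\m(E')>0$), so combining $\rho_0=a_1\rho^{(1)}_0+a_2\rho^{(2)}_0$ with the strict superadditivity $(u+v)^p>u^p+v^p$ (valid for $u,v>0$, $p>1$) and integrating yields
\[
S_{N,\m}(\mu_0)=\int(a_1\rho^{(1)}_0+a_2\rho^{(2)}_0)^{p}\,\de\m>a_1^{p}\,S_{N,\m}(\mu^{(1)}_0)+a_2^{p}\,S_{N,\m}(\mu^{(2)}_0),
\]
contradicting the previous equality. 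Hence $\ppi=T_\#\mu_0$ for some Borel map $T\colon\X\to\Geo(\X)$.

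Uniqueness now follows. If $\ppi^1,\ppi^2\in\OptGeo(\mu_0,\mu_1)$ both satisfy \eqref{eq:KNconvexity}, the midpoint $\ppi:=\tfrac12(\ppi^1+\ppi^2)$ does too (by convexity of $r\mapsto r^p$ for $p>1$), and by the previous step $\ppi=T_\#\mu_0$ and $\ppi^i=T^i_\#\mu_0$ for Borel maps $T,T^1,T^2\colon\X\to\Geo(\X)$. Disintegrating $T_\#\mu_0=\tfrac12(T^1_\#\mu_0+T^2_\#\mu_0)$ against $e_0$ forces $\delta_{T(x)}=\tfrac12(\delta_{T^1(x)}+\delta_{T^2(x)})$ for $\mu_0$-a.e.\ $x$, hence $T=T^1=T^2$ $\mu_0$-a.e., and $\ppi^1=\ppi^2$. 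The main technical hurdle is the measurable splitting of $\ppi$ into pieces $\ppi^{(1)},\ppi^{(2)}$ with overlapping $e_0$-marginals, together with the check that each $\eta^{(i)}$ remains in the regularity class needed to invoke Corollary \ref{lem:gigli1}; once this is set up, the strict superadditivity of $r\mapsto r^p$ provides the contradiction.
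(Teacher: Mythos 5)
Your proof is correct, and while it follows the paper's overall skeleton, the decisive contradiction is obtained by a genuinely different mechanism. Both arguments reduce uniqueness to showing that every $\ppi$ satisfying \eqref{eq:KNconvexity} is induced by a map (via the midpoint plan $\tfrac12(\ppi^1+\ppi^2)$, exactly as in the paper), argue by contradiction, split $\ppi$ into two mutually singular pieces with overlapping initial marginals, and feed these into Proposition \ref{prop:orthogonality} and the endpoint continuity of the entropy from Corollary \ref{lem:gigli1}. The difference lies in how the contradiction is extracted. The paper restricts $\ppi$ to two disjoint balls of geodesics emanating from a carefully chosen compact set $C$ kept away from the singular set $\mathcal S_\m$, and compares \emph{measures of supports}: Lemma \ref{lem:gigli2} forces $\m(\{\rho^i_t>0\})$ to be close to $\m(C)$ for small $t$, while mutual singularity confines both supports to a neighborhood $C^{\tilde\delta}$ with $\m(C^{\tilde\delta})<\tfrac32\m(C)$; this requires the inner-regularity and $\mathcal S_\m$-avoidance bookkeeping. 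You instead split $\ppi$ into \emph{complementary} pieces $\restr{\ppi}{A}$ and $\restr{\ppi}{A^c}$, so that $\rho_t=a_1\rho^{(1)}_t+a_2\rho^{(2)}_t$ exactly, and exploit strict superadditivity of $r\mapsto r^{1-1/N}$ (note $1-1/N>1$ for $N<0$): mutual singularity at interior times makes the entropy exactly additive there, Corollary \ref{lem:gigli1} transports this additivity to $t=0$, and the genuine overlap of $\rho^{(1)}_0$ and $\rho^{(2)}_0$ on the positive-measure set $E'$ makes additivity strictly fail at $t=0$. This bypasses Lemma \ref{lem:gigli2} and the singular-set gymnastics entirely, since only finiteness of the entropies (guaranteed by $\rho^{(i)}_t\le\rho_t/a_i$) is needed; the price is the (standard) measurable-selection step producing $A$ and $E'$, and the observation that $\rho^{(i)}_0(x)=a_i^{-1}\ppi_x(A^{(i)})\rho_0(x)$, which you correctly rely on to get positivity of both densities on $E'$.
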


\begin{pr}
First of all notice that, since the functional $S_{N,\m}$ is convex with respect to linear interpolation, it is sufficient to prove that every $\ppi\in\OptGeo(\mu_0,\mu_1)$ satisfying \eqref{eq:KNconvexity} is induced by a map. So assume by contradiction that there are two measures $\mu_0= \rho_0 \m,\mu_1= \rho_1 \m \in \Prob^*_N(\X,\m)$ and a plan $\ppi\in\OptGeo(\mu_0,\mu_1)$ satisfying \eqref{eq:KNconvexity} which is not induced by a map. Call $\{\ppi_x\}_{x\in \X} \subset \Prob(\Geo(\X))$ the disintegration kernel of $\ppi$ with respect to the map $e_0$. Hence there exists a $\mu$-positive set $C$, such that $\rho_0$ is positive $\m$-almost everywhere on $C$ and $\ppi_x$ is not a delta measure for every $x\in C$. As a consequence, the measure 
\begin{equation*}
    \eta= \int_C \ppi_x \times \ppi_x \de \mu(x) 
\end{equation*}
is not concentrated in in the diagonal $D := \{(\gamma,\gamma): \gamma \in \Geo(\X)\}$. Therefore there exists a point $(\gamma_1,\gamma_2)\in \supp(\eta) \subset \Geo(\X)\times \Geo(\X)$ with $\gamma_1 \ne \gamma_2$. Take $\varepsilon>0$ small enough such that $B(\gamma_1,\varepsilon)\cap B(\gamma_2,\varepsilon)=\emptyset$, then 
\begin{equation*}
    \eta(B(\gamma_1,\varepsilon)\times B(\gamma_2,\varepsilon))> 0 
\end{equation*}
and consequently, up to restricting $C$, we can assume that 
\begin{equation}\label{eq:positivity}
    \ppi_x(B(\gamma_1,\varepsilon)),\ppi_x(B(\gamma_1,\varepsilon))>0 
\end{equation}
for every $x \in C$. On the other hand, if $\mathcal S_\m$ is the singular set for the measure $\m$ introduced in \eqref{def:Sm}, the fact that $\m(\mathcal S_\m)=0$ and that $\mu$ is absolutely continuous with respect to $\m$ ensure that $\mu(\mathcal S_\m)=0$. Thus, for every $\delta>0$ define the open set 
 \begin{equation*}
 S^\delta = \big\{ x \in X \suchthat \inf_{s\in \mathcal S_\m} \di (x,s) < \delta \big\}
 \end{equation*}
 and notice that, since $\mathcal S_\m$ is closed, 
 \begin{equation*}
     \bigcap_{\delta>0} S^\delta = \mathcal S_\m,
 \end{equation*}
 and consequently $\mu(S^\delta)\to0$ as $\delta \to 0$. Therefore for a suitably small $\tilde\delta>0$ it holds that 
 \begin{equation*}
     \mu\big(C \setminus S^{\bar\delta} \big) >0,
 \end{equation*}
 thus, up to further restrict $C$, we can assume that $C \subset \big(S^{\bar\delta}\big)^c$. Then, since $\mu$ is a Radon measure measure and therefore it is inner regular, we can also assume that $C$ is compact and consequently $\m(C)<\infty$. Moreover, the reference measure $\m$ is locally finite when restricted to $X\setminus \mathcal S_\m$, thus for every $\delta$ small enough $\m\big(C^{ \delta}\big)<\infty$, where
 \begin{equation*}
     C^{\delta} = \big\{ x \in X \suchthat \inf_{c\in C} \di (x, c) < \delta \big\}.
 \end{equation*}
 Furthermore, since $C$ is closed, there exists $\tilde \delta >0$ such that $\m\big(C^{\tilde \delta}\big)<\frac 32 \m (C)$. \\
 Now, introduce the sets $\Gamma_1,\Gamma_2\in \Geo(X)$ as
 \begin{equation*}
     \Gamma_1:= B(\gamma_1,\varepsilon) \cap e_0^{-1}(C) \quad \text{and} \quad \Gamma_2:= B(\gamma_2,\varepsilon) \cap e_0^{-1}(C),
 \end{equation*}
 and notice that \eqref{eq:positivity} ensures that $\ppi(\Gamma_1),\ppi(\Gamma_2)>0$. Consequently define $\ppi^1,\ppi^2\in \Prob(\Geo(\X))$ by posing
 \begin{equation*}
     \ppi^1 := \frac{\restr{\ppi}{\Gamma_1}}{\ppi(\Gamma_1)} \quad \text{and} \quad \ppi^2 := \frac{\restr{\ppi}{\Gamma_2}}{\ppi(\Gamma_2)}.
 \end{equation*}
 Observe that $\ppi^1,\ppi^2\ll \ppi$ with bounded density, thus $\mu^1_t:= (e_t)_\# \ppi^1 \in \Prob^*_N(\X,\m)$ and $\mu^2_t:= (e_t)_\# \ppi^2 \in \Prob^*_N(\X,\m)$ for every $t\in [0,1]$. Moreover, since $B(\gamma_1,\varepsilon)\cap B(\gamma_2,\varepsilon)=\emptyset$, it is clear that $\ppi^1\perp\ppi^2$, then, after noticing that $(e_0,e_1)_\# \ppi^1,(e_0,e_1)_\# \ppi^2 \ll (e_0,e_1)_\# \ppi\in \OptPlans$, it is possible to apply Proposition \ref{prop:orthogonality} and conclude that $\mu_t^1\perp\mu_t^2$ for every $t\in(0,1)$.
 On the other hand, the sets $e_1(B(\gamma_1,\varepsilon))$ and $e_1(B(\gamma_2,\varepsilon))$ are bounded, therefore there exists a time $\bar t$ such that $\supp(\mu^1_t), \supp(\mu_t^2) \subset C^{\tilde \delta}$ for every $t\in[0,\bar t]$. Hence we can apply Lemma \ref{lem:gigli2} to the measures $\rest{0}{\bar t}_\#\ppi^1$ and $\rest{0}{\bar t}_\#\ppi^2$ (with $B = C^{\tilde \delta}$) and deduce that 
 \begin{equation*}
   \m(C)= \m(\{\rho^1_0>0\}) \leq \liminf_{t\to 0} \m(\{\rho^1_t>0\}) \quad \text{and}  \quad \m(C)=\m(\{\rho^2_0>0\}) \leq \liminf_{t\to 0} \m(\{\rho^2_t>0\}),
\end{equation*}
where $\rho^1_t$ and $\rho^2_t$ denote the density of $\mu^1_t$ and $\mu^2_t$ with respect to the reference measure $\m$. In particular, for $t<\bar t$ sufficiently small, using that $\mu_t^1\perp\mu_t^2$ we can conclude that
\begin{equation*}
    \frac32 \m(C) < \m(\{\rho_t^1>0\})+\m(\{\rho_t^2>0\}) \leq \m(C^{\tilde \delta})\leq \frac32 \m(C), 
\end{equation*}
obtaining the desired contradiction.
\end{pr}

\noindent This uniqueness result can be used in order to show the existence of a transport map between two general absolutely continuous marginals, with possibly unbounded support. We point out that the existence of a transport map is a global property and in general it cannot be studied locally and then globalized. Anyway the subsequent proof needs to be done by approximation and this is possible only thanks to the uniqueness provided by Theorem \ref{thm:monge}. 

\begin{corollary}
Let $(\X,\di,\m)$ be an essentially non-branching $\CD^*(K,N)$ space, for some $K\in \R$ and $N<0$. Then, for every $\mu_0,\mu_1\in\ProbTwo(\X)$ which are absolutely continuous with respect to $\m$ there exists $\ppi\in\OptGeo(\mu_0,\mu_1)$ which is induced by a map.
\end{corollary}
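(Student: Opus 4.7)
The plan is to decompose an optimal plan between $\mu_0$ and $\mu_1$ into countably many pieces whose marginals lie in $\Prob^*_N(\X,\m)$, apply Theorem \ref{thm:monge} piecewise, and glue the resulting maps.

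Since $\m$ is quasi-Radon, I would first write $\X\setminus\mathcal{S}_\m$ as a countable disjoint union of bounded Borel sets $\{E_n\}$ with finite $\m$-measure, and stratify by the level sets of the densities $\rho_i:=d\mu_i/d\m$ (well-defined because $\mu_i(\mathcal{S}_\m)=0$). This yields countable partitions $\{A_n\}$ of $\supp(\mu_0)$ and $\{B_m\}$ of $\supp(\mu_1)$ into bounded sets on which the densities are bounded. Fix any $\pi\in\OptPlans(\mu_0,\mu_1)$ (which exists by Prokhorov), set $a_{n,m}:=\pi(A_n\times B_m)$, and when positive let $\tilde\pi_{n,m}:=a_{n,m}^{-1}\pi|_{A_n\times B_m}$ with marginals $\tilde\mu^{n,m}_0,\tilde\mu^{n,m}_1$. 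Each of these is absolutely continuous with bounded support and bounded density, hence lies in $\Prob^*_N(\X,\m)$, and $\tilde\pi_{n,m}$ is optimal between them because restrictions of cyclically monotone plans are cyclically monotone.

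Theorem \ref{thm:monge} then yields, for each such pair, a unique $\ppi^{n,m}\in\OptGeo(\tilde\mu^{n,m}_0,\tilde\mu^{n,m}_1)$ with intermediate marginals in $\Prob^*_N(\X,\m)$, and this plan is induced by a Borel map $T^{n,m}$. Assemble $\ppi:=\sum_{n,m}a_{n,m}\,\ppi^{n,m}$; direct checks give $(e_0)_\sharp\ppi=\mu_0$, $(e_1)_\sharp\ppi=\mu_1$, and
\[
\int \di^2(\gamma_0,\gamma_1)\, d\ppi = \sum_{n,m} a_{n,m}\, W_2^2(\tilde\mu^{n,m}_0,\tilde\mu^{n,m}_1) = \int \di^2\, d\pi = W_2^2(\mu_0,\mu_1),
\]
so $\ppi\in\OptGeo(\mu_0,\mu_1)$.

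The final, most delicate point, which I expect to be the main obstacle, is that $\ppi$ is induced by a single Borel map. Disintegrating $\ppi$ along $e_0$ gives, for $x\in A_n$, the convex combination $\ppi_x=\sum_m\bigl(\rho_0^{n,m}(x)/\rho_0(x)\bigr)\delta_{T^{n,m}(x)}$, so one has to show that for $\mu_0$-a.e.\ $x$ only one index $m$ contributes. My plan is to apply Proposition \ref{prop:orthogonality} to any two $\ppi^{n,m},\ppi^{n,m'}$ with $m\ne m'$: they are mutually singular (their endpoint supports $B_m,B_{m'}$ are disjoint), and their $(e_0,e_1)$-projections are absolutely continuous with respect to the common optimal plan $\pi':=\sum a_{n',m'}(e_0,e_1)_\sharp\ppi^{n',m'}$, so the intermediate measures $\mu^{n,m}_t,\mu^{n,m'}_t$ are mutually singular for every $t\in(0,1)$. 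If mass from two indices $m\ne m'$ both issued from a common $\mu_0$-positive source set $C$, one can mimic the closing contradiction of Theorem \ref{thm:monge}: after restricting $C$ to a compact subset of $\X\setminus\mathcal{S}_\m$ contained in a bounded neighbourhood of finite $\m$-measure with $\m(C^{\tilde\delta})<\tfrac32\m(C)$, Lemma \ref{lem:gigli2} applied to both branches forces $\m(\{\rho^{n,m}_t>0\})$ and $\m(\{\rho^{n,m'}_t>0\})$ to be close to $\m(C)$ for small $t$; by the orthogonality just obtained their sum is at most $\m(C^{\tilde\delta})$, contradicting $2\m(C)\leq\m(C^{\tilde\delta})<\tfrac32\m(C)$. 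Hence exactly one $m$ contributes at each $x$, and $\ppi$ is induced by the Borel map $T$ defined piecewise by $T|_{A_n}(x):=T^{n,m(x)}(x)$.
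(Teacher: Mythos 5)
Your proposal is correct, but it takes a genuinely different route from the paper. The paper argues asymmetrically and iteratively: it first assumes $\mu_0\in\Prob_N^*(\X,\m)$, decomposes only the target $\mu_1$ into pieces $F_n$ of finite entropy, and inductively replaces $\restr{\ppi_{n-1}}{e_1^{-1}(F_n)}$ by the plan given by Theorem \ref{thm:monge}; the crucial point is that at each stage the \emph{cumulative} plan $\frac{1}{\mu_1(E_n)}\restr{\ppi_n}{e_1^{-1}(E_n)}$ is itself a Wasserstein geodesic in $\Prob_N^*(\X,\m)$ (a finite convex combination of such), so the uniqueness statement of Theorem \ref{thm:monge} applies to it directly and already rules out any mass splitting among the first $n$ cells; a total-variation limit and a second decomposition in $\mu_0$ finish the proof. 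You instead perform a single symmetric double decomposition $A_n\times B_m$ of a fixed optimal plan, apply Theorem \ref{thm:monge} cell by cell, and then must separately exclude that a $\mu_0$-positive set of sources feeds two distinct target cells $B_m,B_{m'}$ — which you do by rerunning the orthogonality-plus-Lemma \ref{lem:gigli2} contradiction from the proof of Theorem \ref{thm:monge} (after the same reductions: $C$ compact, away from $\mathcal S_\m$, with $\m(C^{\tilde\delta})<\frac32\m(C)$). All the hypotheses you need do check out (bounded density plus bounded support gives membership in $\Prob_N^*(\X,\m)$ since $\rho^{1-1/N}\le\|\rho\|_\infty^{-1/N}\rho$; restrictions of cyclically monotone plans are optimal; $(e_0,e_1)_\sharp\ppi^{n,m}\ll(e_0,e_1)_\sharp\ppi$ which is optimal), so the argument is sound. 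What the paper's route buys is that the hardest step never has to be repeated: the no-splitting fact is already encoded in the uniqueness of Theorem \ref{thm:monge} applied to finite combinations. You could shorten your last step in the same spirit by applying Theorem \ref{thm:monge} to the normalized finite partial sums $\sum_{m\le M}a_{n,m}\ppi^{n,m}$ for fixed $n$ (their endpoint and intermediate marginals are finite convex combinations of measures in $\Prob_N^*(\X,\m)$, hence again in $\Prob_N^*(\X,\m)$), whose conclusion immediately forces $\ppi_x$ to be a Dirac mass on each finite block, and then let $M\to\infty$.
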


\begin{pr}
First of all assume that $\mu_0\in \Prob_N^*(\X,\m)$ and fix $\ppi\in\OptGeo(\mu_0,\mu_1)$. Consider a countable and measurable family of disjoint bounded sets $(F_n)_{n\in \setN^+}$, covering $\mu_1$-almost all $\X$ (that is $\mu_1(\X\setminus\cup_{n}F_n)=0$), such that $\mu_1(F_n)>0$ for every $n$ and \begin{equation*}
    S_N\bigg(\frac{\restr{\mu_1}{F_n}}{\mu_1(F_n)}\bigg)<\infty \qquad \text{for every }n\in\setN.
\end{equation*}
We are going to define inductively a sequence $(\ppi_n)_{n\in \setN} \subset \Prob(\Geo(\X))$, with the following properties (where $E_n= \cup_{i=1}^n F_i$)
\begin{itemize}
    \item[(i)] for every $n \in \setN$, we have $\ppi_n\in \OptGeo(\mu_0,\mu_1)$ thus in particular $(e_0)_\#\ppi_n=\mu_0$ and $(e_1)_\#\ppi_n=\mu_1$,
    \item[(ii)] for every $n\in \setN^+$ and every $m \in \setN^+$ such that $m<n$, it holds that $\restr{\ppi_n}{e_1^{-1}( E_m)}=\restr{\ppi_m}{e_1^{-1}( E_m)}$,
    \item[(iii)] for every $n\in \setN^+$ the optimal geodesic plan $\frac{1}{\mu_1(E_n)}\restr{\ppi_n}{e_1^{-1}( E_n)}$ is induced by a map and constitutes a Wasserstein geodesic contained in $\Prob_N^*(\X)$.
\end{itemize}
 First of all put $\ppi_0=\ppi$, which obviously satisfies all the required properties. Then, given $\ppi_{n-1}$, call $\mu_0^n$ and $\mu_1^n$ the marginals at time $0$ and $1$ (respectively) of $\frac{1}{\mu_1(F_n)}\restr{\ppi_{n-1}}{e_1^{-1}( F_n)}$. Then consider the optimal geodesic plan $\tilde \ppi_n \in \OptGeo(\mu_0^n,\mu_1^n)$ provided by Theorem \ref{thm:monge}. Notice that $\mu_1^n\in \Prob_N^*(\X, \m)$ because of the definition of $F_n$ and $\mu_0^n\in \Prob_N^*(\X, \m)$ because $\mu_0^n \ll \mu_0$ with bounded density.
 Consequently define 
 \begin{equation}\label{eq:induction}
     \ppi_n= \ppi_{n-1} - \restr{\ppi_{n-1}}{e_1^{-1}( F_n)} + \mu_1(F_n) \tilde \ppi_n,
 \end{equation}
 it is easy to realize that $\ppi_n$ satisfies (i). Using the inductive assumption it is also clear that property (ii) holds, in fact from \eqref{eq:induction} follows that $\ppi_n$ and $\ppi_{n-1}$ coincide on $e_1^{-1}(F_n^c)$. Moreover it holds that
 \begin{equation*}
     \frac{1}{\mu_1(E_n)}\restr{\ppi_n}{e_1^{-1}( E_n)} = \frac{1}{\mu_1(E_{n})}\restr{\ppi_{n-1}}{e_1^{-1}( E_{n-1})} + \frac{\mu_1(F_n)}{\mu_1(E_{n})} \tilde \ppi_n
 \end{equation*}
 and therefore $\frac{1}{\mu_1(E_n)}\restr{\ppi_n}{e_1^{-1}( E_n)}$ constitutes a Wasserstein geodesic contained in $\Prob_N^*(\X, \m)$. As a consequence, Theorem \ref{thm:monge} ensures that it is induced by a map, proving (iii). Now the combination of properties (i), (ii) and (iii) implies that $(\ppi_n)_{n\in \setN}$ is a Cauchy sequence with respect to the total variation norm and thus it converges to $\tilde \ppi \in \OptGeo(\mu_0,\mu_1)$ with the property that $\restr{\tilde \ppi}{e_1^{-1}(E_n)}=\restr{ \ppi_n}{e_1^{-1}(E_n)}$ for every $n\in \setN^+$. We are now going to prove that $\ppi$ is induced by a map. Assume by contradiction this is not true. Calling $\{\tilde \ppi_x\}_{x\in X} \subset \Prob(\Geo(X))$ the disintegration of $\tilde \ppi$ with respect to the map $e_0$, $\tilde\ppi_x$ is not a delta measure for a $\mu$-positive set of $x$. Then, since $\mu_1(X\setminus\cup_{n}F_n)=0$, there exists $\bar n \in \setN$ such that $\restr{\tilde\ppi_x}{e_1^{-1}(E_{\bar n})}$ is not a delta measure for a $\mu$-positive set of $x$, contradicting the fact that $\restr{\tilde \ppi}{e_1^{-1}(E_{\bar n})}$ is induced by a map.\\
 We can now explain the proof of the general case, assuming only the absolute continuity on the first marginal. This proof can be done using an approximation procedure, very similar the one showed in the first part of the proof, for this reason we will not explain all the passages. As before, let us consider a countable and measurable family of disjoint bounded sets $(F_n)_{n\in \setN^+}$, covering $\mu_0$-almost all $X$, such that $\mu_0(F_n)>0$ for every $n$ and \begin{equation*}
    S_N\bigg(\frac{\restr{\mu_0}{F_n}}{\mu_0(F_n)}\bigg)<\infty \qquad \text{for every }n\in\setN.
\end{equation*}
Then it is possible to proceed as before, using an inductive procedure (and the first part of the proof) in order to obtain $\tilde \ppi \in \OptGeo(\mu_0,\mu_1)$ such that $\restr{\tilde \ppi}{e_0^{-1}(E_n)}$ (where $E_n= \cup_{i=1}^n F_n$) is induced by a map for every $n \in \setN^+$. Therefore, this is sufficient to conclude that $\tilde \ppi$ is induced by a map.
\end{pr}

\section{Local-to-global Property}

In this last section we prove a local-to-global result for the reduced $\CD$ condition with a negative dimensional parameter. For this purpose, in this section we assume the metric measure space $(\X, \di, \m)$ to be locally compact.

\begin{definition}
For fixed  $K\in \R, N \in (-\infty,0)$, we say that a metric measure space $(\X,\mathsf{d},\m)$ satisfies the condition $\mathsf{CD}^*(K-,N)$ if for every $\mu_{0}=\rho_{0} \m,\, \mu_{1}=\rho_{1} \m \in \Prob_N^*(\X,\m)$ and every $K'<K$ there exists an optimal coupling $\pi\in \OptPlans(\mu,\nu)$ and a $W_{2}$-geodesic  $\{\mu_t\}_{t \in [0, 1]} \subset \mathscr{P}_N^*(\X,\m)$ connecting $\mu_0$ and $\mu_1$ such that
\begin{align}\label{def:CD-}
S_{N', \m}(\mu_t) \leq R_{K',N'}^{(t)}(\pi | \m)
\end{align}
holds for every $t\in [0,1]$ and every $N'\in [N, 0)$.
\end{definition}

We point out that, unlike to what happens in the positive dimensional case, the $\mathsf{CD}^*(K-,N)$ condition is not equivalent to the $\mathsf{CD}^*(K,N)$ one in general. This is basically due to the pathologies of the distortion coefficients $\sigma_{K,N}^{(t)}$ when $K<0$, on the other hand, if the curvature parameter is non-negative, this equivalence holds. We underline that the proof in the positive dimensional case relies on the lower semicontinuity of the entropy functionals, which does not hold in our context. Anyway we can overcome this difficulty using the uniqueness results obtained in section \ref{section3}.

\begin{prop}\label{prop:positivecurv}
For fixed  $K\geq 0, N \in (-\infty,0)$, a metric measure space $(\X,\mathsf{d},\m)$ satisfies the condition $\mathsf{CD}^*(K,N)$ if and only if it satisfies the $\mathsf{CD}^*(K-,N)$ one.
\end{prop}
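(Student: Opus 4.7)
The implication $\CD^*(K,N) \Rightarrow \CD^*(K-,N)$ is immediate from the monotonicity of the distortion coefficients in $K$: for $K \geq 0$, $\sigma_{K,N}^{(t)}(\theta)$ is non-increasing in $K$ (since $\sqrt{-K/N}$ is increasing in $K$ and $x \mapsto \sinh(tx)/\sinh(x)$ is decreasing on $(0,\infty)$), and for $K' \leq 0 \leq K$ we have $\sigma_{K',N}^{(t)}(\theta) \geq t = \sigma_{0,N}^{(t)}(\theta) \geq \sigma_{K,N}^{(t)}(\theta)$. Thus $R_{K',N'}^{(t)}(\pi \mid \m) \geq R_{K,N'}^{(t)}(\pi \mid \m)$ for every $K' \leq K$, and any $(\pi,(\mu_t))$ witnessing \eqref{def:CD} automatically witnesses \eqref{def:CD-}.

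For the converse, I would fix $\mu_0,\mu_1 \in \Prob^*_N(\X,\m)$ and a sequence $K_n \uparrow K$ with $K_n < K$; the hypothesis supplies, for each $n$, an optimal coupling $\pi_n$ and a $W_2$-geodesic $(\mu_t^n)_{t \in [0,1]} \subset \Prob^*_N(\X,\m)$ satisfying \eqref{def:CD-} with parameter $K_n$. The key maneuver, announced in the text as ``overcoming'' the failure of lower semicontinuity via the Section~\ref{section3} uniqueness results, is to observe that $\CD^*(K_n,N)$ holds for every $n$, so that (in the essentially non-branching setting that I take to be the standing assumption here) Theorem~\ref{thm:monge} produces a unique optimal geodesic plan $\ppi \in \OptGeo(\mu_0,\mu_1)$ whose intermediate marginals stay in $\Prob^*_N(\X,\m)$. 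Uniqueness forces this plan to be characterised by $\mu_0,\mu_1$ alone, independently of $K_n$, and in particular $(\mu_t^n) = (\mu_t) := ((e_t)_\#\ppi)_{t \in [0,1]}$ coincides for all $n$. Only the right-hand side of \eqref{def:CD-} then still depends on~$n$.

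To finish, the plan is to pass to the limit on the right. Since $\supp(\pi_n) \subset \supp(\mu_0) \times \supp(\mu_1)$ is bounded, $\di(x,y)$ stays bounded on $\supp(\pi_n)$, and for $K \geq 0$ the coefficient $\sigma_{K_n,N'}^{(t)}(\di(x,y))$ decreases monotonically to $\sigma_{K,N'}^{(t)}(\di(x,y))$ as $n\to\infty$. The integrand of $R_{K_n,N'}^{(t)}(\pi_n \mid \m)$ is bounded in absolute value by a constant multiple of $\rho_0(x)^{-1/N'} + \rho_1(y)^{-1/N'}$, which is $\pi_n$-integrable with uniform integral $S_{N',\m}(\mu_0)+S_{N',\m}(\mu_1)<\infty$. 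Extracting a weak subsequential limit $\pi_{n_k} \to \pi \in \OptPlans(\mu_0,\mu_1)$ and using a routine truncation of the unbounded factors $\rho_i^{-1/N'}$ together with the monotone decrease in $K_n$ then yields $R_{K_{n_k},N'}^{(t)}(\pi_{n_k} \mid \m) \to R_{K,N'}^{(t)}(\pi \mid \m)$, so that the $\CD^*(K_n,N)$ inequality passes to the limit and produces \eqref{def:CD}.

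The main obstacle is conceptual rather than computational: the standard positive-dimensional argument relies on lower semicontinuity of $S_{N',\m}$ along the limiting Wasserstein geodesic, a property that Proposition~\ref{prop:lsc} guarantees only on $\Prob^{\mathcal S}(\X,\m)$ and therefore cannot be invoked for weak limits that might charge the singular set $\mathcal{S}_\m$. Theorem~\ref{thm:monge} neatly sidesteps this difficulty by forcing the intermediate marginals to be independent of $n$, so that the semicontinuity step collapses to the identity $\mu_t^n=\mu_t$; the sole remaining analytic work is the straightforward limit of the right-hand side, controlled by bounded supports and a uniformly integrable dominant.
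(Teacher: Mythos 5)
Your proposal is correct and follows essentially the same route as the paper: the "only if" direction by monotonicity of $\sigma^{(t)}_{K,N}$ in $K$, and the "if" direction by taking $K_n\nearrow K$, invoking the uniqueness of the $\Prob_N^*$-valued geodesic from Theorem \ref{thm:monge} so that only the right-hand side depends on $n$, and then passing $R^{(t)}_{K_n,N'}(\pi_n\,|\,\m)\to R^{(t)}_{K,N'}(\pi\,|\,\m)$ to the limit via tightness of $(\pi_n)$ and an $L^1$-approximation/truncation of $\rho_i^{-1/N'}$ combined with uniform convergence of the bounded coefficients. You also correctly flag that essential non-branching must be in force for Theorem \ref{thm:monge} to apply, which matches the paper's implicit standing assumption.
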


\begin{proof}
The ``only if'' part of the statement is obviously true. In order to prove the ``if'' part, we start by noticing that Theorem \ref{thm:monge} ensure that every pair of marginals $\mu_{0},\, \mu_{1} \in \Prob_N^*(\X,\m)$ is connected by a unique geodesic $(\mu_t)_{t\in [0,1]}$ with domain in $\Prob_N^*(\X,\m)$. In particular we can take a sequence $(K_n)_{n \in \setN}$ such that $K_n \nearrow K$, and find for every $n$ an optimal plan $\pi_n\in \OptPlans(\mu_0,\mu_1)$ such that
\begin{equation}\label{eq:Kn}
    S_{N,\m} (\mu_t)\leq R_{K_n,N}^{(t)}(\pi_n|\m), \quad \text{for every }t \in [0,1].
\end{equation}
The sequence $(\pi_n)_{n\in \setN}$ is obviously tight, since every measure has the same marginals, thus up to taking a suitable subsequence there exists $\pi \in \OptPlans(\mu_0,\mu_1)$ such that $\pi_n\weakto \pi$. In order to conclude the proof it is sufficient to show that, for every $t\in [0,1]$, 
\begin{equation}\label{eq:RKN}
    \lim_{n\to \infty} R_{K_n,N}^{(t)}(\pi_n|\m) = R_{K,N}^{(t)}(\pi|\m),
\end{equation}
in fact this will allow to pass \eqref{eq:Kn} at the limit as $n\to \infty$. To this aim we just need to prove that 
\begin{equation*}
   \lim_{n\to \infty} \int \sigma_{K_n,N}^{(1-t)} (\di(x,y)) \rho_0(x)^{-\frac 1N} \de \pi_n = \int \sigma_{K,N}^{(1-t)} (\di(x,y)) \rho_0(x)^{-\frac 1N} \de \pi ,
   \end{equation*}
   the other term of $R_{K,N}^{(t)}$ can be treated analogously. Notice that, since $S_{N, \m}(\mu_0)<\infty$, then $\rho_0(x)^{- 1/N}\in L^1(\mu_0)$ thus, according to \cite[Lemma 2.12]{MRS21},
for every fixed $\varepsilon>0$ there exists $f^\varepsilon\in C_b(\X)$ such that $\norm{\rho_0^{- 1/N}-f^\varepsilon}_{L^1(\mu_0)}<\varepsilon$. Moreover, notice that  the coefficients $\sigma_{K_n,N}^{(1-t)}$ and $\sigma_{K,N}^{(1-t)}$ are uniformly bounded above by $1$ and continuous.  Moreover, it is easy to realize that
\begin{equation*}
    C_b(\X \times \X) \ni \sigma_{K_n,N}^{(1-t)} (\di(x,y)) f^\varepsilon(x) \rightarrow \sigma_{K,N}^{(1-t)} (\di(x,y)) f^\varepsilon(x) \in C_b(\X \times \X)
\end{equation*}
uniformly. As a consequence, the weak convergence $(\pi_n)_n \rightharpoonup \pi$ ensures that,
\begin{equation*}
    \lim_{n\to \infty} \int \sigma_{K_n,N}^{(1-t)} (\di(x,y)) f^\varepsilon(x) \de \pi_n = \int_{X\times X} \sigma_{K,N}^{(1-t)} (\di(x,y)) f^\varepsilon(x) \de \pi.
\end{equation*}
Furthermore, the uniform bound on $\sigma_{K_n,N}^{(1-t)}$ and $\sigma_{K,N}^{(1-t)}$ allows to deduce the following estimate
\begin{equation*}
    \begin{split}
        \limsup_{n\to \infty} \int \sigma_{K_n,N}^{(1-t)} (\di(x,y)) \rho_0(x)^{-\frac 1N} \de \pi_n &\leq \lim_{n\to \infty} \int \sigma_{K_n,N}^{(1-t)} (\di(x,y)) f^\varepsilon(x) \de \pi_n + \varepsilon  \\
        &= \int \sigma_{K,N}^{(1-t)} (\di(x,y)) f^\varepsilon(x) \de \pi + \varepsilon \\
        &\leq \int \sigma_{K,N}^{(1-t)} (\di(x,y)) \rho_0(x)^{-\frac 1N} \de \pi + 2  \varepsilon  .
    \end{split}
\end{equation*}
Analogously,  it can be proven that 
\begin{equation*}
    \liminf_{n\to \infty} \int \sigma_{K_n,N}^{(1-t)} (\di(x,y)) \rho_0(x)^{-\frac 1N} \de \pi_n \geq \int \sigma_{K,N}^{(1-t)} (\di(x,y)) \rho_0(x)^{-\frac 1N} \de \pi - 2  \varepsilon ,
\end{equation*}
and since $\varepsilon>0$ and can be chosen arbitrarily, equation \eqref{eq:RKN} holds true.\\
\end{proof}

In order to prove the local-to-global property we need a preliminary proposition, which states an equivalent characterization of the $\CD^\ast(K-,N)$ condition. The analogous result for the $\CD^\ast(K,N)$ condition for positive $N$ is proven in \cite[Proposition 2.8]{BacherSturm10}, but the approximation argument used relies on the lower semicontinuity of the entropy functionals. For this reason, we need to work with the $\CD^\ast(K-,N)$ condition and to proceed in a different way. 

\begin{prop}[Equivalent characterizations] \label{prop:equivalent}
Given a proper essentially non-branching metric measure space $(\X,\mathsf{d},\m)$, $K \in \R$ and $N < 0$, the following statements are equivalent:
\begin{itemize}
\item[(i)]$(\X,\mathsf{d},\m)$ satisfies the condition $\mathsf{CD}^*(K-,N)$.
\item[(ii)]For every $K'<K$ and every pair of marginals $\mu_0,\mu_1\in\Prob_N^*(\X,\m)$ there exists a geodesic $\mu:[0,1]\rightarrow\Prob_N^*(\X,\m)$
connecting $\mu_0$ and $\mu_1$ such that, for all $t\in [0,1]$ and all $N'\in [N,0)$, it holds that
\begin{equation} \label{convt}
S_{N', \m}(\mu_t)\leq\sigma^{(1-t)}_{K',N'}(\theta)S_{N', \m}(\mu_0)+
\sigma^{(t)}_{K',N'}(\theta)S_{N', \m}(\mu_1),
\end{equation}
where
\begin{equation} \label{theta}
\theta:=
\begin{cases}
\inf_{x_0\in\mathcal{S}_0,x_1\in\mathcal{S}_1}\mathsf{d}(x_0,x_1),& \text{if $K\geq 0$},\\
\sup_{x_0\in\mathcal{S}_0,x_1\in\mathcal{S}_1}\mathsf{d}(x_0,x_1),& \text{if $K<0$},
\end{cases}
\end{equation}
denoting by $\mathcal{S}_0$ and $\mathcal{S}_1$ the supports of $\mu_0$ and $\mu_1$, respectively.
\end{itemize}
\end{prop}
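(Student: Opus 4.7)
The forward direction (i) $\Rightarrow$ (ii) follows immediately from the monotonicity recorded in \eqref{eq:monotonicitysigma}. Given the coupling $\pi$ and geodesic produced by $\CD^*(K-,N)$ at parameter $K'$, I would bound $\sigma^{(s)}_{K',N'}(\di(x,y))$ pointwise on $\supp \pi$: when $K' \geq 0$ the map $\theta \mapsto \sigma^{(s)}_{K',N'}(\theta)$ is non-increasing, so $\sigma^{(s)}_{K',N'}(\di(x,y)) \leq \sigma^{(s)}_{K',N'}(\theta)$ with $\theta := \inf_{\supp\mu_0 \times \supp\mu_1}\di$, while for $K' < 0$ the monotonicity is reversed and the supremum is used instead. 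Since $\theta$ does not depend on $(x,y)$, it factors out of $R^{(t)}_{K',N'}(\pi|\m)$, reducing it to $\sigma^{(1-t)}_{K',N'}(\theta) S_{N',\m}(\mu_0) + \sigma^{(t)}_{K',N'}(\theta) S_{N',\m}(\mu_1)$, which is precisely \eqref{convt}.

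For (ii) $\Rightarrow$ (i) I would adapt the partition-and-glue strategy devised by Bacher--Sturm in the positive-dimensional case, substituting the uniqueness result Theorem \ref{thm:monge} for the global lower semicontinuity of $S_{N',\m}$ that is unavailable in our setting. Fix $\mu_0, \mu_1 \in \Prob_N^*(\X,\m)$, $K' < K$ and an optimal plan $\pi \in \OptPlans(\mu_0,\mu_1)$. For each $n$ take a countable Borel partition $\{A^n_i\}_i$ of $\supp \pi$ with $\diam(A_i^n) \le 1/n$, using a preliminary truncation (possible because $\m|_{\X\setminus \mathcal S_\m}$ is Radon) so that the normalised sub-plans $\pi|_{A^n_i}/\pi(A^n_i)$ have marginals $\mu_0^{i,n}, \mu_1^{i,n} \in \Prob_N^*(\X,\m)$. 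Apply (ii) at parameter $K'$ on each piece: this produces optimal geodesic plans $\ppi^{i,n}$ whose intermediate marginals $\mu_t^{i,n} \in \Prob_N^*(\X,\m)$ satisfy \eqref{convt} with $\theta = \theta_i^n$, where $\theta_i^n$ is the infimum or supremum of distances in $A_i^n$ depending on the sign of $K'$. Since restrictions of a cyclically monotone plan are themselves cyclically monotone, the mixture $\ppi^n := \sum_i \pi(A^n_i)\, \ppi^{i,n}$ lies in $\OptGeo(\mu_0,\mu_1)$. Using the convexity of $S_{N',\m}$ under linear interpolation (which in this negative-dimensional regime follows from convexity of $x \mapsto x^{1-1/N'}$) combined with the local bound, one derives
\[
S_{N',\m}(\mu_t^n) \le \sum_i \pi(A^n_i)\left[\sigma^{(1-t)}_{K',N'}(\theta_i^n) S_{N',\m}(\mu_0^{i,n}) + \sigma^{(t)}_{K',N'}(\theta_i^n) S_{N',\m}(\mu_1^{i,n})\right],
\]
and a careful disintegration of $\pi$ compatible with the partition, together with $\theta_i^n \to \di(x,y)$ on $A_i^n$ and the continuity of $\sigma^{(s)}_{K',N'}$ in its argument, should show that the right-hand side tends to $R^{(t)}_{K',N'}(\pi|\m)$ as $n \to \infty$; this bookkeeping of density factors is a technical step that may require choosing the partition to be compatible with the disintegration structure.

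The main obstacle is the passage to the limit on the left-hand side, because $S_{N',\m}$ is only lower semicontinuous on $\Prob^{\mathcal S}(\X,\m)$ rather than on the whole $\ProbTwo(\X)$. Tightness of $(\ppi^n)$ is automatic from the fixed endpoint marginals, so up to subsequences $\ppi^n \weakto \ppi^\infty \in \OptGeo(\mu_0,\mu_1)$; the delicate point is whether the limit curve $\mu_t^\infty = (e_t)_\# \ppi^\infty$ remains absolutely continuous and concentrated on $\X \setminus \mathcal S_\m$, so that Proposition \ref{prop:lsc} applies. Here Theorem \ref{thm:monge} is indispensable: it identifies $\ppi^\infty$ as the unique element of $\OptGeo(\mu_0,\mu_1)$ whose intermediate marginals stay in $\Prob_N^*(\X,\m)$, thereby pinning down simultaneously the regularity of the limit (in particular $\mu_t^\infty \ll \m$) and the geodesic whose existence \eqref{def:CD-} demands. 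Once this is established, Proposition \ref{prop:lsc} delivers $S_{N',\m}(\mu_t^\infty) \le \liminf_n S_{N',\m}(\mu_t^n) \le R^{(t)}_{K',N'}(\pi|\m)$, and the arbitrariness of $K' < K$ concludes $\CD^*(K-,N)$.
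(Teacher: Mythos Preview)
Your forward direction (i) $\Rightarrow$ (ii) is fine and matches the paper's one-line argument via \eqref{eq:monotonicitysigma}.

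For (ii) $\Rightarrow$ (i), however, there are two genuine gaps.

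\textbf{The convexity bound is too crude and the right-hand side diverges.} The functional $S_{N',\m}$ is positively homogeneous of degree $1-\tfrac{1}{N'}>1$, so under linear convexity you obtain weights $\alpha_i$, whereas the computation that would recombine into $R^{(t)}_{K',N'}(\pi|\m)$ requires weights $\alpha_i^{1-1/N'}$. Concretely, $\alpha_i\, S_{N',\m}(\mu_0^{i,n}) = \int_{A_i^n} (\rho_0^{i,n}(x))^{-1/N'}\de\pi$, and when $\pi$ is induced by a map one has $\rho_0^{i,n}=\rho_0/\alpha_i$ on its support, so each summand picks up an extra factor $\alpha_i^{1/N'}\to\infty$ as the partition refines. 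Thus your displayed upper bound does not converge to $R^{(t)}_{K',N'}(\pi|\m)$; it blows up. No ``compatible'' choice of partition repairs this: what is needed is not convexity but the exact identity $S_{N',\m}\big(\sum_i \alpha_i \mu_t^{i}\big)=\sum_i \alpha_i^{1-1/N'} S_{N',\m}(\mu_t^{i})$, which holds only when the pieces are \emph{mutually singular} at time $t$.

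\textbf{The appeal to Theorem \ref{thm:monge} is circular.} That theorem assumes $(\X,\di,\m)$ is a $\CD^*(K,N)$ space, which is precisely what you are deriving from (ii). Even granting uniqueness, weak convergence $\ppi^n\weakto\ppi^\infty$ gives no a~priori information that $(e_t)_\#\ppi^\infty\ll\m$, so Proposition \ref{prop:lsc} cannot be applied to the limit; you would be assuming the very regularity you are trying to extract.

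The paper sidesteps both issues by avoiding any limiting procedure. It fixes an intermediate $\tilde K\in(K',K)$ and constructs a single \emph{multi-scale} product partition $\{C_i^n\times C_j^n\}_{n,(i,j)}$ of $B_R(o)\times B_R(o)$, retaining at level $n$ only those cells whose separation exceeds a threshold $\delta_n$ chosen so that $\sqrt{\tilde K}\,(\di(x,y)-2^{-n})\geq \sqrt{K'}\,\di(x,y)$ there; this converts the piecewise bound at parameter $\tilde K$ with $\theta^{n,ij}$ directly into a bound at parameter $K'$ with the exact distance $\di(x,y)$, with no passage to the limit. The diagonal is handled separately. Crucially, in place of convexity the paper invokes Proposition \ref{prop:orthogonality} (which needs only the essentially non-branching hypothesis, not any curvature bound) to obtain mutual singularity of the glued pieces $\mu_t^{n,ij}$ at every intermediate time; this yields the exact formula $S_{N',\m}(\mu_t)=\sum_{n,(i,j)} (\alpha_{ij}^n)^{1-1/N'} S_{N',\m}(\mu_t^{n,ij})$, with precisely the weights required to recombine into $R^{(t)}_{K',N'}(\mathsf q|\m)$.
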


\begin{proof}
(i) $\Rightarrow$ (ii): This implication easily follows from the monotonicity property of the coefficient $\sigma^{(t)}_{K,N'}$, see \eqref{eq:monotonicitysigma}.

(ii) $\Rightarrow$ (i): We prove this implication only for $K>0$, our argument applies without any major modification also when $K\leq 0$. Notice that it is sufficient to prove condition \eqref{def:CD-} for $0<K'<K$, because of the monotonicity properties of the coefficients $\sigma^{(t)}_{K,N}$. Thus we fix $0<K'<K$ and two measures $\mu_0,\mu_1\in\Prob_N^*(\X,\m)$, which in particular means that there exist $o\in X$ and $R>0$ such that $\supp(\mu_0),\supp(\mu_1)\subseteq B_R(o)$. We also fix $K'<\tilde K<K$ and consider an arbitrary coupling $\mathsf{\tilde q}\in \mathsf{Opt}(\mu_0,\mu_1)$. Now for every $n\in \setN$ let $\mathcal{C}^n=\{C_1^n,\dots , C^n_{m_n}\}$ be a (finite) Borel partition of $B_R(o)$, that is 
\begin{equation*}
    \bigcup_{i=1}^{m_n} C^n_i= B_R(o) \quad \text{ and } \quad C^n_i\cap C^n_j= \emptyset \text{ for every }i\neq j,
\end{equation*}
such that $\diam(C^n_i)\leq \frac{1}{2^{n+1}}$ for every $i=1,\dots,m_n$. Moreover we assume that, for every $n\in \setN$, $\mathcal{C}_{n+1}$ is consistent with $\mathcal{C}_{n}$, meaning that for every $i$ we have that $C^n_i= C^{n+1}_{i_1}\cup \cdots \cup C^{n+1}_{i_k}$ for a suitable choice of indices $i_1,\dots i_k$. Furthermore, for every $n$ we take 
\begin{equation*}
    \delta_n:= \bigg[2^n\bigg(1- \sqrt{\frac{K'}{\tilde K}}\bigg)\bigg]^{-1},
\end{equation*}
and we define 
\begin{equation*}
    \tilde I_n:=\left\{(i,j)\in \{1,\dots,m_n\}^2 \, :\, \inf_{x\in C_i^n,y\in C^n_j} \di (x,y)> \delta_n \right\}.
\end{equation*}
Hence, let us set
\begin{equation*}
    E_n:= \bigcup_{(i,j)\in \tilde I_n} C_i^n\times C_j^n \subset B_R(o)\times B_R(o),
\end{equation*}
and notice that $E_{n}\subseteq E_{n+1}$.
Consequently we introduce for every $n\in \setN$ the set
\begin{equation*}
    \bar I_n = \left\{(i,j)\in \tilde I_n \, :\,  (C_i^n\times  C^n_j ) \cap E_{n-1}=\emptyset \right\},
\end{equation*}
where we assume $E_{-1}=\emptyset$. Given these definitions it is easy to realize that, calling $D:=\{(x,x)\,:\,x\in \X\}$ the diagonal of $\X$, it holds
\begin{equation}\label{eq:Ibar}
    \bigcup_{n=1}^\infty \bigcup_{(i,j)\in \bar I_n} C_i^n\times C_j^n = B_R(o) \times B_R(o) \setminus D ,
\end{equation}
moreover $(C_i^n\times C_j^n) \cap (C_k^{m}\times C_l^{m})= \emptyset$ for every $n,m\in\setN$ and  $(i,j)\in I_n, \, (k,l)\in I_m$.
 Now, for every $n\in\setN$, we define the set of indices
 \begin{equation*}
     I_n := \left\{ (i,j)\in \bar I_n \, :\, \mathsf{\tilde q}(C_i^n\times C_j^n)>0  \right\}
 \end{equation*}
 and for every $(i,j)\in I_n$ the associated probability measures $\mu^{n,ij}_0$ and $\mu^{n,ij}_1$ by
\[\mu^{n,ij}_0(A):=\frac{1}{\alpha^n_{ij}}\mathsf{\tilde q}((A\cap C_i^n)\times C^n_j) \quad \text{and} \quad \mu^{n,ij}_1(A):=\frac{1}{\alpha_{ij}} \mathsf{\tilde q}(C^n_i\times(A\cap C^n_j)),\]
where each $\alpha^n_{ij}:=\mathsf{\tilde q}(C_i\times C_j)\not=0$ is the suitable normalization constant. Then we call
\[\mathcal S ^{n,ij}_0:=\mathsf{supp}(\mu^{n,ij}_0)\subseteq\overline{C^n_i}  \quad \text{and} \quad \mathcal S ^{n,ij}_1=: \mathsf{supp}(\mu^{n,ij}_1)\subseteq\overline{C^n_j}\] and, accordingly to \eqref{theta}, we introduce
\begin{equation*}
\theta^{n,ij}:=\inf_{x_0\in\mathcal{S}^{ij}_0,x_1\in\mathcal{S}^{ij}_1}\mathsf{d}(x_0,x_1).
\end{equation*}
Observe that, since $\text{diam}(C^n_i)\leq \frac{1}{2^{n+1}}$ for every $i=1,\dots,m_n$, it holds that $\di(x,y)-\frac{1}{2^{n}}\leq \theta^{ij}$ for every $x\in\mathcal S ^{ij}_0$ and $y\in\mathcal S ^{ij}_1$.
By the assumption (ii), for every $n\in \setN$ and $(i,j)\in I_n$ there exist $\mathsf q^{n,ij} \in \mathsf{Opt}(\mu^{n,ij}_0,\mu_1^{n,ij})$ and a Wasserstein geodesic $\mu^{n,ij}:[0,1]\rightarrow\Prob_\infty(\X,\di,\m)$, connecting $\mu^{n,ij}_0=\rho^{n,ij}_0\m$ to $\mu^{n,ij}_1=\rho^{n,ij}_1\m$ and satisfying
\begin{equation*}
    S_{N',\m}(\mu^{n,ij}_t)\leq\sigma^{(1-t)}_{\tilde K,N'}(\theta^{n,ij})S_{N',\m}(\mu^{n,ij}_0)+
\sigma^{(t)}_{\tilde K,N'}(\theta^{n,ij})S_{N',\m}(\mu^{n,ij}_1).
\end{equation*}
Then, since $\di(x,y)-\frac{1}{2^n}\leq \theta^{n,ij}$ for every $x\in\mathcal S ^{n,ij}_0$ and $y\in\mathcal S ^{n,ij}_1$, it holds that
\begin{equation}\label{eq:convij}
    \begin{split}
        S&_{N',\m}(\mu^{n,ij}_t)\\
        &\leq\int \Big[\sigma^{(1-t)}_{\tilde K,N'}(\mathsf{d}(x_0,x_1)-2^{-n})\rho^{n,ij}_0(x_0)^{-1/N'}+\sigma^{(t)}_{\tilde K,N'}(\mathsf{d}(x_0,x_1)-2^{-n})\rho^{n,ij}_1(x_1)^{-1/N'}\Big]\de\mathsf{q}^{n,ij}(x_0,x_1)\\
&\leq \int \Big[\sigma^{(1-t)}_{ K',N'}(\mathsf{d}(x_0,x_1))\rho^{n,ij}_0(x_0)^{-1/N'}+\sigma^{(t)}_{ K',N'}(\mathsf{d}(x_0,x_1))\rho^{n,ij}_1(x_1)^{-1/N'}\Big]\de\mathsf{q}^{n,ij}(x_0,x_1)\\
&= R_{ K',N'}^{(t)}(\mathsf q ^{n,ij}|\m)
    \end{split}
\end{equation}
for every $t\in[0,1]$ and every $N'\in[ N,0)$, where the second inequality is a consequence of 
\begin{equation*}
   \sqrt{\tilde K} \cdot (\di(x,y)-2^{-n}) \geq \sqrt{K'} \cdot \di(x,y),
\end{equation*}
which holds for $\mathsf q ^{n,ij}$-almost everywhere because of the definition of $\delta_n$ and $\tilde I_n$. On the other hand, if $\alpha^D:=\mathsf{\tilde q}(D)>0$ we call
$$\rho^D_0 \m = \mu^D_0:=\frac{1}{\alpha^D}(\p_1)_\#[\mathsf{\tilde q}|_D]= \frac{1}{\alpha^D} (\p_2)_\# [\mathsf{\tilde q}|_ D]=:\mu^{D}_1 = \rho_1^D \m,$$
then, putting $\mu^D_t\equiv \mu_0^D=\mu_1^D$ and $\mathsf q^D= \frac{1}{\alpha^D} \mathsf{\tilde q}|_{D}$, it obviously holds that
\begin{equation*}
    S_{N',\m}(\mu^{D}_t)\leq R_{ K',N'}^{(t)}(\mathsf q^D |\m),
\end{equation*}
for every $t\in[0,1]$ and every $N'\in[ N,0)$. As a consequence of \eqref{eq:Ibar} it is also possible to conclude that 
\begin{equation*}
    \mu_\iota:=\alpha^D\mu_\iota^D + \sum^\infty_{n=1} \sum_{(i,j)\in I_n} \alpha^n_{ij}\mu^{n,ij}_\iota \quad \text{ for }\iota=0,1.
\end{equation*}
We can now define 
$$\mathsf{q}:= \alpha^D\mathsf{q}^D + \sum^\infty_{n=1} \sum_{(i,j)\in I_n} \alpha^n_{ij}\mathsf{q}^{n,ij} \quad \text{and} \quad \mu_t:=\alpha^D\mu_t^D + \sum^\infty_{n=1} \sum_{(i,j)\in I_n} \alpha^n_{ij}\mu^{n,ij}_t \, \, \text{ for every }t\in [0,1],$$ where both the series converge in the total variation norm.
Then clearly $\mathsf{q}$ is an optimal coupling of $\mu_0$ and $\mu_1$ while $\mu_t$ defines a geodesic connecting them. It is also easy to realize that we can apply Proposition \ref{prop:orthogonality} and deduce that for every $t\in[0,1]$
\begin{equation*}
    \mu_t^{n,ij} \perp \mu_t^{m,kl} \, \,\text{ if }n\neq m \text{ or } n=m \text{ and } (i,j)\ne(k,l),
\end{equation*}
moreover
\begin{equation*}
    \mu_t^{n,ij} \perp \mu_t^D \,\,\text{ for every }n\in \setN \text{ and }(i,j)\in I_n.
\end{equation*}
As a consequence, for every $t\in[0,1]$ and every $N'\in [N,0)$ it holds that 
\[S_{N',\m}(\mu_t)=[\alpha^D]^{1-1/N'}S_{N',\m}(\mu^D_t)+\sum^\infty_{n=1} \sum_{(i,j)\in I_n}[\alpha^n_{ij}]^{1-1/N'}S_{N',\m}(\mu^{n,ij}_t).\]
On the other hand, keeping in mind the definitions of $\mathsf q$, $\alpha^{n,ij}$ and $\mu_\iota^{n,ij}$ for $\iota=0,1$, we can easily conclude that 
\[
\begin{split}
    [\alpha&^D]^{1-1/N'}R_{ K',N'}^{(t)} (\mathsf q^D|\m)+ \sum^\infty_{n=1} \sum_{(i,j)\in I_n}[\alpha_{ij}^n]^{1-1/N'} R_{ K',N'}^{(t)}(\mathsf q ^{n,ij}|\m) \\
    &=  \int \sigma^{(1-t)}_{ K',N'}(\mathsf{d}(x_0,x_1))[\alpha^D\rho^D_0(x_0)]^{-1/N'}+\sigma^{(t)}_{ K',N'}(\mathsf{d}(x_0,x_1))[\alpha^D\rho^D_1(x_1)]^{-1/N'}\Big]\de [\alpha^D\mathsf{q}^D](x_0,x_1)\\
   &\qquad+ \sum^\infty_{n=1} \sum_{(i,j)\in I_n}\int \Big[\sigma^{(1-t)}_{ K',N'}(\mathsf{d}(x_0,x_1))[\alpha^n_{ij}\rho^{n,ij}_0(x_0)]^{-1/N'}+\sigma^{(t)}_{ K',N'}(\mathsf{d}(x_0,x_1))[\alpha^n_{ij}\rho^{n,ij}_1(x_1)]^{-1/N'}\Big]\\&\hspace{13cm}\de [\alpha^n_{ij}\mathsf{q}^{n,ij}](x_0,x_1)\\
    &\leq \int \Big[\sigma^{(1-t)}_{ K',N'}(\mathsf{d}(x_0,x_1))\rho_0(x_0)^{-1/N'}+\sigma^{(t)}_{ K',N'}(\mathsf{d}(x_0,x_1))\rho_1(x_1)^{-1/N'}\Big]\de\mathsf{q}(x_0,x_1)= R_{ K',N'}^{(t)}(\mathsf q|\m)
\end{split}
\]
Combining these last two relations with \eqref{eq:convij}, we obtain that
\begin{equation*}
    S_{N',\m}(\mu_t) \leq R_{ K',N'}^{(t)}(\mathsf q|\m),
\end{equation*}
concluding the proof.
\end{proof}

We are now ready to state the main result of this section:

\begin{theorem}\label{thm:globloc}
Let $K, N \in \R$ with $N < 0$ and let $(\X,\mathsf{d},\m)$ be a locally compact, essentially non-branching metric measure space such that $\Prob_N^* (\X,\m)$ is a geodesic space. If $(\X,\mathsf{d},\m)$ satisfies the condition $\CD^\ast(K-, N)$ locally, then it satisfies the condition $\CD^\ast(K-, N)$ globally.
\end{theorem}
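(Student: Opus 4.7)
The plan is to adapt the classical Bacher--Sturm globalization argument to the negative-dimensional setting, substituting the unavailable lower semicontinuity of $S_{N',\m}$ on all of $\Prob_2(\X)$ with the uniqueness furnished by Theorem \ref{thm:monge} and with the slack built into the $\CD^\ast(K-,N)$ condition. By Proposition \ref{prop:equivalent}, it is enough to show that, given $\mu_0,\mu_1\in\Prob_N^*(\X,\m)$ and $K'<K$, there exists a geodesic $(\mu_t)\subset\Prob_N^*(\X,\m)$ along which the pointwise inequality \eqref{convt} holds for every $t\in[0,1]$ and every $N'\in[N,0)$, with $\theta$ as in \eqref{theta}. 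I would fix such data, pick an auxiliary $\tilde K\in(K',K)$, and note that, by the geodesic-space hypothesis, a connecting geodesic in $\Prob_N^*(\X,\m)$ exists; by Theorem \ref{thm:monge} it is unique, and the associated optimal geodesic plan $\ppi$ is induced by a map.

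Next, I would subdivide $[0,1]$ using local compactness. Since $\mu_0,\mu_1$ have bounded supports, the set $\bigcup_{t\in[0,1]}\supp(\mu_t)$ is bounded, hence contained in a compact subset of $\X$ (Hopf--Rinow under local compactness and the geodesic structure). Cover this compact set by finitely many open balls $U_1,\dots,U_m$ on which the local $\CD^\ast(K-,N)$ property applies, and exploit the fact that $\supp(\ppi)\subset\Geo(\X)$ is compact (bounded and equi-Lipschitz) to choose a partition $0=t_0<\cdots<t_n=1$ fine enough that, for every $i$, the restricted plan $(\rest{t_i}{t_{i+1}})_\#\ppi$ is concentrated on geodesics lying in a single $U_{j(i)}$. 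The restricted endpoints $\mu_{t_i},\mu_{t_{i+1}}$ belong to $\Prob_N^*(\X,\m)$ and are supported in $U_{j(i)}$, so local $\CD^\ast(K-,N)$ applied with parameter $\tilde K$ produces an optimal geodesic plan realising the $(\tilde K,N')$-entropy inequality. By the uniqueness of Theorem \ref{thm:monge}, this locally produced plan must coincide with $(\rest{t_i}{t_{i+1}})_\#\ppi$, and hence the local inequality is realised along our fixed global geodesic on every subinterval.

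To pass from the $n$ local inequalities (with parameter $\tilde K$) to the global one \eqref{convt} (with parameter $K'$), I would invoke the classical Bacher--Sturm concatenation scheme based on the Jacobi-type functional identity satisfied by the coefficients $\sigma^{(\cdot)}_{\cdot,N'}$: iterated dyadic refinement lifts local $(\tilde K,N')$-convexity of $t\mapsto S_{N',\m}(\mu_t)$ on each small interval to global $(K',N')$-convexity on $[0,1]$, the gap $\tilde K>K'$ absorbing the error produced at each step of the Jacobi concatenation. In the classical proof, the limits involved in this dyadic iteration are controlled by the lower semicontinuity of the entropy; in our setting the same role will be played by the continuity of $t\mapsto S_{N',\m}(\mu_t)$ along the chosen geodesic, which follows from Corollary \ref{lem:gigli1} together with the remark immediately after its proof. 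The arbitrariness of $K'<K$ then yields the $\CD^\ast(K-,N)$ condition globally.

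The main obstacle will precisely be this concatenation step: without lower semicontinuity of $S_{N',\m}$ on $\Prob_2(\X)$, one cannot freely take limits of approximating geodesics as in the positive-dimensional case. The twofold remedy is that the uniqueness in Theorem \ref{thm:monge} lets us fix a single global geodesic up front and identify it with all locally produced ones, so that no limit of curves need be taken, while the ``$K-$'' flexibility, together with the entropy-continuity along the chosen geodesic, absorbs the error accumulated in the dyadic iteration. A subsidiary technical point is to ensure that the intermediate supports remain compatible with the local neighbourhoods $U_j$, and in particular stay well-separated from the singular set $\mathcal S_\m$; this is manageable because $\mu_t(\mathcal S_\m)=0$ for all $t$ and because $\bigcup_t\supp(\mu_t)$ is bounded.
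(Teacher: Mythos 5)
Your overall strategy matches the paper's: reduce to condition (ii) of Proposition \ref{prop:equivalent}, fix the (unique, by Theorem \ref{thm:monge}) geodesic in $\Prob_N^*(\X,\m)$ up front so that no limits of curves are ever taken, verify the entropy inequality at a small dyadic scale using the local hypothesis, and then propagate it to scale one by an exact dyadic concatenation of the coefficients $\sigma^{(\cdot)}_{K',N'}$. However, there is a genuine gap in your localization step. You propose to choose times $0=t_0<\dots<t_n=1$ so fine that each restricted plan $(\rest{t_i}{t_{i+1}})_\#\ppi$ is concentrated on geodesics lying in a \emph{single} chart $U_{j(i)}$. This is false in general: refining in time makes the geodesics short, but they still emanate from the whole of $\supp(\mu_{t_i})$, which can be as large as $\bar B_{2R}(o)$ and need not fit in any one chart. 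One must also decompose in \emph{space}: the paper restricts $\hat{\mathsf q}=(e_s,e_t)_\#\Gamma$ to $L_j\times\X$ for a finite partition $L_1,\dots,L_n$ of $\bar B_{2R}(o)$, applies the local condition to each pair $(\Gamma_j(s),\Gamma_j(t))$ supported in $X_j\supset B_\lambda(L_j)$, and then must (a) identify the reassembled curve $\sum_j\alpha_j\hat\Gamma_j$ with the original restricted geodesic (Proposition \ref{prop:ambrosiogigli}), and (b) prove that the intermediate measures $\hat\Gamma_j(r)$ are mutually singular (Proposition \ref{prop:orthogonality}) so that $S_{N',\m}$ decomposes as $\sum_j\alpha_j^{1-1/N'}S_{N',\m}(\hat\Gamma_j(r))$ and the local inequalities can be summed. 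Without (b) the entropies of the pieces cannot be recombined, and this mutual-singularity step is exactly where the essential non-branching hypothesis enters; your proposal omits it entirely.

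A secondary inaccuracy: you place the role of the slack $K'<\tilde K<K$ in the dyadic concatenation, claiming it absorbs errors there and that continuity of $t\mapsto S_{N',\m}(\mu_t)$ replaces lower semicontinuity in taking limits. In the paper's argument the dyadic step (Lemma \ref{lemma:k-1}) is an \emph{exact} finite induction from scale $2^{-\kappa}$ up to scale $1$, based on the identities \eqref{eq:coef1}--\eqref{eq:coef2}; no limit is taken and no entropy continuity is needed there. The gap $K'<K$ is consumed instead inside Proposition \ref{prop:equivalent} (converting the support-dependent $\theta$-inequality into the pointwise $R^{(t)}_{K',N'}$ inequality via the discretization with $\delta_n$). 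This does not by itself break your proof, but it means the mechanism you describe for the concatenation is not the one that actually closes the argument.
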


The proof of this theorem relies on Proposition \ref{prop:equivalent}, so our goal is to demonstrate $(ii)$; to this aim we fix $K'<K$.
Before presenting the proof, we introduce the basic construction which allows to show its validity.\\

Let us fix a metric measure space $(\X,\di,\m)$ as in the hypothesis of Theorem \ref{thm:globloc}, satisfying $\CD^\ast(K-, N)$  locally for some $K \in \R$ and $N < 0$. Observe that, since $\Prob_N^* (\X,\m)$ is a geodesic space, then $(\X,\di)$ is a length space. Therefore the metric version of the Hopf-Rinow theorem (see for example \cite[Theorem 2.4]{Ballmann}) ensures that $(\X,\di)$ is proper, being locally compact. 
Finally, we fix a metric ball $B_R(o)$ and for $k \in \N \cup \{0\}$, we introduce the following property, that we denote by $\mathsf C(k)$. We remark that this property is similar in spirit to the one proposed in \cite{BacherSturm10}, where it is formulated in terms of midpoints of geodesics. However, in our situation it is more straightforward to consider directly the whole geodesic, thanks to the uniqueness result proven in the previous section.\\

\noindent$\mathsf C(k)$: For each geodesic $\Gamma \colon [0, 1] \to \Prob_N^* (\X,  \m)$ such that $\supp(\mu_0),\supp(\mu_1)\subseteq B_R(o )$ and for each pair of times $s, t \in [0,1]$, such that $t-s = 2^{-k}$ the (restricted and reparameterized) geodesic $\Gamma$ between $ \Gamma(s)$ and $\Gamma(t)$, satisfies the inequality
\[
S_{N',\m}\big(\Gamma(s + r(t-s))\big) \le \sigma^{(1-r)}_{K', N'}( \theta^k) S_{N',\m}(\Gamma(s)) + \sigma^{(r)}_{K', N'}( \theta^k) S_{N',\m}(\Gamma(t)),
\]
for all $r \in [0, 1]$ and $N' \in [ N, 0)$, where
\[
\theta^0:= \inf_{\gamma \in \supp(\Gamma)} \mathsf d(\gamma(0), \gamma(1)) \quad \textrm{ and } \quad \theta^k := \frac{\theta^0}{2^k}.
\]
In the following we treat the case $K > 0$, the general one follows by analogous computations.

\begin{lemma}\label{lemma:k-1}
If $\mathsf C(k)$ is satisfied for some $k \in \N$, then also $\mathsf C(k-1)$ holds true.
\end{lemma}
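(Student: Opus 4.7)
The plan is to derive $\mathsf C(k-1)$ from $\mathsf C(k)$ in two stages: first establish the midpoint case of $\mathsf C(k-1)$ by a three-fold application of $\mathsf C(k)$ at midpoints, and then propagate the resulting bound to every intermediate time by combining with $\mathsf C(k)$ on one of the two halves, exploiting two elementary identities for the distortion coefficients $\sigma_{K',N'}^{(\cdot)}$. Fix a geodesic $\Gamma \colon [0,1]\to \Prob^*_N(\X,\m)$ with $\supp(\Gamma(0)),\supp(\Gamma(1))\subseteq B_R(o)$, choose $s,t\in [0,1]$ with $t-s=2^{-(k-1)}$, set $m:=(s+t)/2$ and, for brevity, $\phi(\cdot):=S_{N',\m}(\Gamma(\cdot))$. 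By the symmetry $r\leftrightarrow 1-r$ (interchanging $s$ and $t$) it suffices to handle $r\in [0,1/2]$, so that $u:=s+r(t-s)\in [s,m]$.

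For the midpoint bound on $\phi(m)$, I would apply $\mathsf C(k)$ with parameter $1/2$ to the three length-$2^{-k}$ subintervals $[s,m]$, $[m-2^{-(k+1)},m+2^{-(k+1)}]$ and $[m,t]$, each of which clearly lies inside $[0,1]$. Writing $c:=\sigma^{(1/2)}_{K',N'}(\theta^k)$ and denoting by $p_s,p_t$ the midpoints of $[s,m]$ and $[m,t]$ respectively, these three applications read
\[
\phi(p_s)\le c\bigl(\phi(s)+\phi(m)\bigr),\quad \phi(p_t)\le c\bigl(\phi(m)+\phi(t)\bigr),\quad \phi(m)\le c\bigl(\phi(p_s)+\phi(p_t)\bigr),
\]
and their chaining produces $\phi(m)\le c^2\bigl(\phi(s)+2\phi(m)+\phi(t)\bigr)$. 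Since $K'>0$ forces $c=1/(2\cosh(\theta^k\sqrt{-K'/N'}/2))\le 1/2$, we have $1-2c^2>0$, and rearranging together with the half-angle identity
\[
\frac{c^2}{1-2c^2}=\sigma^{(1/2)}_{K',N'}(\theta^{k-1}),
\]
a direct consequence of $\sinh(2x)=2\sinh(x)\cosh(x)$ and $2\cosh^2(x)-1=\cosh(2x)$, yields the midpoint bound $\phi(m)\le \sigma^{(1/2)}_{K',N'}(\theta^{k-1})\bigl(\phi(s)+\phi(t)\bigr)$.

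For a general point $u$, I apply $\mathsf C(k)$ on $[s,m]$ with parameter $2r\in [0,1]$ to get
\[
\phi(u)\le \sigma^{(1-2r)}_{K',N'}(\theta^k)\phi(s)+\sigma^{(2r)}_{K',N'}(\theta^k)\phi(m),
\]
and substitute the midpoint bound for $\phi(m)$. The coefficients then collapse exactly via the two addition-type identities
\[
\sigma^{(1-r)}_{K',N'}(\theta^{k-1})=\sigma^{(1-2r)}_{K',N'}(\theta^k)+\sigma^{(2r)}_{K',N'}(\theta^k)\,\sigma^{(1/2)}_{K',N'}(\theta^{k-1}),
\]
\[
\sigma^{(r)}_{K',N'}(\theta^{k-1})=\sigma^{(2r)}_{K',N'}(\theta^k)\,\sigma^{(1/2)}_{K',N'}(\theta^{k-1}),
\]
producing precisely $\phi(u)\le \sigma^{(1-r)}_{K',N'}(\theta^{k-1})\phi(s)+\sigma^{(r)}_{K',N'}(\theta^{k-1})\phi(t)$, which is $\mathsf C(k-1)$. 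The case $r\in [1/2,1]$ is covered by the symmetric argument on $[m,t]$.

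The only real task, beyond the geometric setup, is the verification of the $\sigma$-coefficient identities. The second identity is immediate from the definition (the ratio telescopes), while the first follows from the product-to-sum formula $2\cosh(a)\sinh(b)=\sinh(a+b)-\sinh(a-b)$ applied to $a=\theta^k\sqrt{-K'/N'}$, $b=(1-2r)\theta^k\sqrt{-K'/N'}$; the cases $K'=0$ (both identities become linear) and $K'<0$ (with $\sin$ in place of $\sinh$, and the implicit smallness condition on $\theta^0\sqrt{K'/N'}$ required to stay outside singularities of $\sigma^{(\cdot)}_{K',N'}$ and to keep $1-2c^2>0$) are handled analogously. The other check, trivial but to be noted, is that the three length-$2^{-k}$ subintervals used in the midpoint step all lie inside $[0,1]$, which is automatic because each has length $2^{-k}\le 2^{-(k-1)}=t-s$.
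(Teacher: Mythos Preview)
Your proof is correct and follows essentially the same route as the paper's: a three-fold application of $\mathsf C(k)$ at midpoints on $[s,m]$, $[p_s,p_t]$, $[m,t]$ yields the midpoint estimate $\phi(m)\le \sigma^{(1/2)}_{K',N'}(\theta^{k-1})(\phi(s)+\phi(t))$ via the same half-angle computation, and then $\mathsf C(k)$ on one half together with the two $\sigma$-identities (your displayed identities are exactly the paper's \eqref{eq:coef1} and \eqref{eq:coef2}, stated more cleanly) gives the bound for general $r$. The paper likewise restricts to $K>0$ and declares the remaining cases analogous, so your brief remarks on $K'\le 0$ go slightly beyond what the paper spells out.
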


\begin{proof}

Let $k \in \N$ be such that $\mathsf C(k)$ is satisfied and let $\Gamma$ be a geodesic in $\Prob_N^* (\X, \m)$ such that $\supp(\mu_0),\supp(\mu_1)\subseteq B_R(o )$, moreover we fix $s, t \in [0,1]$
with $t-s=2^{1-k}$. First of all, let us observe that property $\mathsf C(k)$ ensures that it holds
\begin{equation}\label{eq:Ck1}
\begin{split}
S_{N', \m}\Big( \Gamma\big(s+r\cdot 2^{-k}\big)  \Big) \le  \sigma^{(1-r)}_{K', N'}\big( \theta^{k}  \big) S_{N', \m}\big(\Gamma(s)\big) + \sigma^{(r)}_{K', N'}\big( \theta^{k}  \big)S_{N', \m}\Big( \Gamma\big(s+2^{-k}\big) \Big),
\end{split}
\end{equation}
as well as
\begin{equation}\label{eq:Ck2}
\begin{split}
S_{N', \m}\Big( \Gamma\big(s+2^{-k} +r\cdot 2^{-k}\big)  \Big) \le  \sigma^{(1-r)}_{K', N'}\big( \theta^{k} \big) S_{N', \m}\Big(\Gamma\big(s+2^{-k}\big)\Big) + \sigma^{(r)}_{K', N'}\big( \theta^{k}  \big)S_{N', \m}\big( \Gamma(t) \big),
\end{split}
\end{equation}
for all $N' \in [N, 0)$. Now, applying property $\mathsf C(k)$ between the times $s + 2^{-(k +1)}$ and $s + 3 \cdot 2^{-(k+1)}$, which are at distance $2^{-k}$, we can also obtain the following chain of inequalities
\[
\begin{split}
S_{N', \m}\Big(\Gamma\big(s+2^{-k}\big)\Big) &\le \sigma^{(1/2)}_{K', N'}(\theta^k) S_{N', \m}\Big(\Gamma\big(s+2^{-(k+1)}\big)\Big) + \sigma^{(1/2)}_{K', N'}(\theta^k) S_{N', \m}\Big(\Gamma\big(s+3\cdot 2^{-(k+1)}\big)\Big)\\
& \le \sigma^{(1/2)}_{K', N'}(\theta^k) \Big[ \sigma^{(1/2)}_{K', N'}(\theta^k)S_{N', \m}\big(\Gamma(s)\big)   + \sigma^{(1/2)}_{K', N'}(\theta^k) S_{N', \m}\Big(\Gamma\big(s+2^{-k}\big)\Big)  \Big]\\
&\quad + \sigma^{(1/2)}_{K', N'}(\theta^k) \Big[\sigma^{(1/2)}_{K', N'}(\theta^k) S_{N', \m}\Big(\Gamma\big(s+2^{-k}\big)\Big)   + \sigma^{(1/2)}_{K', N'}(\theta^k)S_{N', \m}\big(\Gamma(t)\big) \Big],
\end{split}
\]
that in particular leads to
\begin{equation}\label{eq:midpoint}
S_{N', \m}\Big(\Gamma\big(s+2^{-k}\big)\Big) \le \dfrac{\big(\sigma^{(1/2)}_{K', N'}(\theta^k) \big)^2}{1-2\big(\sigma^{(1/2)}_{K', N'}(\theta^k) \big)^2} \big[ S_{N', \m} \big(\Gamma(s)\big) + S_{N', \m}\big( \Gamma(t)\big)  \big].
\end{equation}
Hence, let us observe that
\[
\begin{split}
 \dfrac{\Big(\sigma^{(1/2)}_{K', N'}(\theta^k) \Big)^2}{1-2\Big(\sigma^{(1/2)}_{K', N'}(\theta^k) \Big)^2} &= \dfrac{\sinh^2\Big(\frac{ \theta^k}{2} \sqrt{-K'/N'}\Big)}{\sinh^2\big( \theta^k \sqrt{-K'/N'}\big)} \cdot \dfrac{\sinh^2\big( \theta^k \sqrt{-K'/N'}\big)} {{\sinh^2\big( \theta^k \sqrt{-K'/N'}\big)} - 2 \sinh^2\Big(\frac { \theta^k}{2} \sqrt{-K'/N'}\Big)}\\
 &=\dfrac{\sinh^2\Big(\frac{ \theta^k}{2} \sqrt{-K'/N'}\Big)}{\cosh^2\big( \theta^k \sqrt{-K'/N'}\big) - \cosh\big( \theta^k \sqrt{-K'/N'}\big) }\\
 &=\dfrac 12 \dfrac{1}{\cosh \big( \theta^k \sqrt{-K'/N'}\big)} = \sigma^{(1/2)}_{K', N'}(2 \theta^k).
 \end{split}
\]
Moreover, since $\theta^{k} = \frac 12 \theta^{k-1}$, it holds that $\sigma^{(1/2)}_{K', N'}(2 \theta^k) = \sigma^{(1/2)}_{K', N'}(\theta^{k-1})$ and we can rewrite inequality \eqref{eq:midpoint} as
\begin{equation}\label{eq:midpoint2}
S_{N', \m}\big(\Gamma\big(s+2^{-k}\big)\big) \le \sigma^{(1/2)}_{K', N'}(\theta^{k-1})  S_{N', \m} \big( \Gamma(s)\big) + \sigma^{(1/2)}_{K', N'}(\theta^{k-1})  S_{N', \m} \big(\Gamma(t)\big).
\end{equation}
Thus, let us consider the geodesic $\Gamma$ restricted (and reparametrized) between the times $s$ and $t$ by considering the curve $[0,1]\ni r \to \Gamma(s + r \cdot 2^{1-k})$. If $r \in [0, 1/2]$, inequality \eqref{eq:Ck1} ensures that
\[
\begin{split}
S_{N', \m}&\big(\Gamma(s + r \cdot 2^{1-k})\big) \le \sigma^{(1-2r)}_{K', N'}(\theta^k)S_{N', \m}\big(\Gamma(s)\big) + \sigma^{(2r)}_{K', N'}(\theta^k)S_{N', \m}\big(\Gamma(s + 2^{-k})\big)\\
&\overset{\eqref{eq:midpoint2}}{\le} \Big( \sigma^{(1-2r)}_{K', N'}(\theta^k) + \sigma^{(2r)}_{K', N'}(\theta^k) \sigma^{(1/2)}_{K', N}(\theta^{k-1})  \Big) S_{N', \m}\big(\Gamma(s)\big) + \sigma^{(2r)}_{K', N'}(\theta^k) \sigma^{(1/2)}_{K', N}(\theta^{k-1})   S_{N', \m}\big(\Gamma(t)\big).
\end{split}
\]
Now, a direct computation shows that
\begin{equation}\label{eq:coef1}
    \sigma_{K',N'}^{\big((x_r-s)/2^{-k}\big)}(\theta^k)\,\sigma_{K',N'}^{(1/2)}(\theta^{k-1}) = \sigma_{K',N'}^{\big((x_r-s)/{2^{-(k+1)}}\big)}(\theta^{k-1}).
\end{equation}
while, using the sum-to-product trigonometric formulas, it is also possible to prove that 
\begin{equation}\label{eq:coef2}
    \sigma_{K',N'}^{\big((s+2^{-k}-x_r)/{2^{-k}}\big)}(\theta^k)+ \sigma_{K',N'}^{\big((x_r - s)/{2^{-k}}\big)}(\theta^k)\,\sigma_{K',N'}^{(1/2)}(\theta^{k-1}) = \sigma_{K',N'}^{\big((s+2^{-(k+1)}-x_r)/2^{-(k+1)}\big)}(\theta^{k-1}),
\end{equation}
where $x_r$ is any time between $s$ and $s+2^{-(k+1)}$. Making use of these expressions, we can then write the bound on $S_{N', \m}\big(\Gamma(s + r \cdot 2^{1-k})\big)$ as
\[
S_{N', \m}\big(\Gamma(s + r \cdot 2^{1-k})\big) \le \sigma^{(1-r)}_{K', N'}(\theta^{k-1}) S_{N', \m}\big(\Gamma(s)\big) +  \sigma^{(r)}_{K', N'}(\theta^{k-1}) S_{N', \m}\big(\Gamma(t)\big),
\]
when $r \in [0, 1/2]$. In the case in which $r \in [1/2, 1]$, we can apply \eqref{eq:Ck2} to obtain that
\[
\begin{split}
S&_{N', \m}\big(\Gamma(s + r \cdot 2^{1-k})\big) \le \sigma^{(2-2r)}_{K, N'}(\theta^k)S_{N', \m}\big(\Gamma(s + 2^{-k})\big) + \sigma^{(2r-1)}_{K', N'}(\theta^k)S_{N', \m}\big(\Gamma(t)\big)\\
&\overset{\eqref{eq:midpoint2}}{\le}  \sigma^{(2-2r)}_{K', N'}(\theta^k) \sigma^{(1/2)}_{K', N'}(\theta^{k-1}) S_{N', \m}\big(\Gamma(s)\big)  + \Big(\sigma^{(2-2r)}_{K', N'}(\theta^k) \sigma^{(1/2)}_{K', N'}(\theta^{k-1})  + \sigma^{(2r-1)}_{K', N'}(\theta^k) \Big)   S_{N', \m}\big(\Gamma(t)\big).
\end{split}
\]
Using again the identities in \eqref{eq:coef1} and \eqref{eq:coef2}, we get the bound
\[
S_{N', \m}\big(\Gamma(s + r \cdot 2^{1-k})\big) \le \sigma^{(1-r)}_{K', N'}(\theta^{k-1}) S_{N', \m}\big(\Gamma(s)\big) +  \sigma^{(r)}_{K', N'}(\theta^{k-1}) S_{N', \m}\big(\Gamma(t)\big),
\]
also when $r \in [1/2, 1]$, which shows the validity of property $\mathsf C(k-1)$.
\end{proof}

Notice that if $\Gamma$ is a geodesic in $\Prob_N^* (\X, \m)$ such that $\supp(\mu_0),\supp(\mu_1)\subseteq B_R(o )$, then $\supp (\Gamma(t))\subseteq \bar B_{2R}(o )$ for every $t\in [0,1]$. The compactness of $\bar B_{2R}(o )$ implies the existence of a constant $\lambda > 0$, of finitely many disjoint sets $L_1, \dots, L_n$ covering $\bar B_{2R}(o )$ and closed sets $X_1, \dots , X_n$ with $B_\lambda(L_j) \subset X_j$ for $j= 1, \dots, n$, that realize the local validity of the $\CD^*(K',N)$ condition. In particular we ask that, for $j=1,\dots,n$, every pair of marginals $\mu_0,\mu_1\in \Prob_N^*(\X,\m)$ such that $\supp(\mu_0),\supp(\mu_1)\subseteq X_j$ can be joined by a geodesic in $\Prob_N^*(\X, \m)$ satisfying \eqref{def:CD}. Hence, we choose a $\kappa \in \N$ such that
\[
2^{-\kappa} \diam(\bar B_{2R}(o)) \leq 2^{2-\kappa} R \le \lambda.
\]

\begin{lemma}\label{lemma:Ckappa}
Under the assumptions of Theorem \ref{thm:globloc}, property $\mathsf C(\kappa)$ holds true.
\end{lemma}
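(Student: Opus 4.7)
The strategy is to exploit the choice of $\kappa$: the restriction of $\Gamma$ between times $s$ and $t$ is supported on curves of length at most $\lambda/2$, which lets me localize the underlying optimal plan on the sets $X_j$ where the local $\CD^*(K-,N)$ hypothesis is active, apply the local convexity inequality piece by piece, and reassemble via the mutual singularity of the intermediate measures supplied by Proposition \ref{prop:orthogonality}.

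Concretely, let $\ppi$ be an optimal geodesic plan representing $\Gamma$ and set $\tilde\ppi := (\rest{s}{t})_\# \ppi$. For each cell $L_j$ of the disjoint partition of $\bar B_{2R}(o)$, let $\alpha_j := \tilde\ppi((e_0)^{-1}(L_j))$ and, whenever $\alpha_j>0$, consider the normalized piece $\hat\ppi_j := \alpha_j^{-1}\tilde\ppi|_{(e_0)^{-1}(L_j)}$. The bound $\di(\gamma_0,\gamma_1) = 2^{-\kappa}\di(\tilde\gamma_0,\tilde\gamma_1) \leq 2^{1-\kappa}R \leq \lambda/2$, valid for $\tilde\ppi$-almost every $\gamma$, guarantees that each $\hat\ppi_j$ is concentrated on curves that remain inside $B_{\lambda/2}(L_j) \subseteq X_j$; the marginals $\mu_0^j,\mu_1^j$ belong to $\Prob_N^*(\X,\m)$ since they have bounded support in $X_j$ and densities controlled by those of $\Gamma(s)$ and $\Gamma(t)$. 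The local $\CD^*(K-,N)$ condition on $X_j$ then yields, for the prescribed $K'<K$, an optimal coupling $\pi^j$ and a $W_2$-geodesic $\nu^j \subseteq \Prob_N^*(\X,\m)$ between $\mu_0^j$ and $\mu_1^j$ satisfying the $(K',N')$-convexity inequality. Viewing $(X_j,\di,\m|_{X_j})$ as a proper, essentially non-branching $\CD^*(K',N)$ structure for marginals at this scale, Theorem \ref{thm:monge} grants uniqueness of such geodesic plans; consequently $\nu^j_r$ must coincide with $\mu_r^j := (e_r)_\# \hat\ppi_j$ for every $r \in [0,1]$, and the inequality provided by the local condition transfers verbatim to the restricted piece of $\Gamma$.

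To assemble the local estimates I rely on Proposition \ref{prop:orthogonality}: the $\hat\ppi_j$'s are pairwise singular (their time-$0$ supports are the disjoint cells $L_j$) and their endpoint projections are dominated by the optimal coupling $(e_0,e_1)_\# \tilde\ppi$, hence $\mu_r^j \perp \mu_r^k$ for $j\neq k$ and $r \in (0,1)$. This yields the identity
\[
S_{N',\m}\bigl(\Gamma(s+r(t-s))\bigr) = \sum_j \alpha_j^{1-1/N'}\, S_{N',\m}(\mu_r^j),
\]
together with the analogue $S_{N',\m}(\Gamma(s)) = \sum_j \alpha_j^{1-1/N'} S_{N',\m}(\mu_0^j)$ coming from the disjointness of the cells $L_j$, and the one-sided estimate $\sum_j \alpha_j^{1-1/N'} S_{N',\m}(\mu_1^j) \leq S_{N',\m}(\Gamma(t))$ from the superadditivity of $x \mapsto x^{1-1/N'}$ for $N'<0$. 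Finally, every $\gamma \in \supp(\hat\ppi_j)$ satisfies $\di(\gamma_0,\gamma_1) \geq 2^{-\kappa}\theta^0 = \theta^\kappa$, so by the monotonicity recorded in \eqref{eq:monotonicitysigma} (with the case distinction $K'\geq 0$ versus $K'<0$) one may replace the distortion coefficients in each local inequality by $\sigma^{(1-r)}_{K',N'}(\theta^\kappa)$ and $\sigma^{(r)}_{K',N'}(\theta^\kappa)$. Multiplying each local inequality by $\alpha_j^{1-1/N'}$ and summing in $j$ produces exactly the inequality required by $\mathsf C(\kappa)$.

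The main obstacle lies in the uniqueness step inside $X_j$: one needs to argue that the local geodesic $\nu^j$ coincides with the restricted piece $\hat\ppi_j$ rather than a competing candidate. This hinges on the intermediate measures of $\nu^j$ being supported in $X_j$, so that $(X_j,\di,\m|_{X_j})$ genuinely behaves as an essentially non-branching $\CD^*(K',N)$ space for these marginals and Theorem \ref{thm:monge} can be invoked there. The relation $2^{2-\kappa}R\leq\lambda$, combined if necessary with a prior refinement of the partition $\{L_j\}$ so that each cell has small diameter, is precisely what makes this possible, but the verification has to be handled with care.
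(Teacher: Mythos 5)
Your overall architecture is the same as the paper's: restrict $\Gamma$ to $[s,t]$, cut the restricted plan along the cells $L_j$, apply the local $\CD^*(K-,N)$ inequality on each $X_j$, and reassemble using the mutual singularity supplied by Proposition \ref{prop:orthogonality}. The gap is exactly where you say it is, and it is not a mere verification to be ``handled with care'': your identification of the local geodesic $\nu^j$ with the restricted piece $(e_r)_\#\hat\ppi_j$ rests on applying Theorem \ref{thm:monge} to $(X_j,\di,\m|_{X_j})$, and that space is not known to satisfy the hypotheses of Theorem \ref{thm:monge}. The local hypothesis only says that marginals supported in $X_j$ are joined by \emph{some} geodesic in $\Prob_N^*(\X,\m)$ satisfying the entropy inequality; that geodesic is a geodesic of the ambient space and need not remain inside $X_j$, the metric space $(X_j,\di)$ need not be geodesic, and replacing $\m$ by $\m|_{X_j}$ does not turn $X_j$ into an essentially non-branching $\CD^*(K',N)$ space in its own right. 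Applying Theorem \ref{thm:monge} to the ambient $\X$ is of course also unavailable, since the global $\CD^*$ condition is precisely what the lemma is building towards. Without this identification you cannot conclude that the measures appearing in the local inequalities are the time-$r$ slices of $\Gamma$, nor that $\theta_j\ge\theta^\kappa$, so the reassembled estimate does not a priori say anything about $S_{N',\m}(\Gamma(s+r(t-s)))$.

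The tool that closes the gap is Proposition \ref{prop:ambrosiogigli}, which needs only the essential non-branching of $(\X,\di,\m)$ and no curvature assumption. Set $\hat\Gamma(r):=\sum_j\alpha_j\nu^j_r$; this is a geodesic from $\Gamma(s)$ to $\Gamma(t)$ (the lengths add up because the $\pi^j$ glue to an optimal coupling of $\Gamma(s)$ and $\Gamma(t)$). Concatenating it with $\Gamma|_{[0,s]}$ and $\Gamma|_{[t,1]}$ produces a geodesic from $\mu_0$ to $\mu_1$ with absolutely continuous endpoints, and the uniqueness of the optimal geodesic plan emanating from the interior point $\Gamma(s)$ (respectively $\Gamma(t)$) forces $\hat\Gamma(r)=\Gamma(s+r(t-s))$ for all $r$. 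Note that this identifies only the \emph{sum} $\sum_j\alpha_j\nu^j_r$ with the slice of $\Gamma$, not each $\nu^j_r$ with your $\mu^j_r$ individually; but the sum identification, combined with the pairwise singularity of the $\nu^j_r$ from Proposition \ref{prop:orthogonality} and the superadditivity at time $t$, is all that the entropy bookkeeping requires, and it also yields the lower bound $\theta_j\ge\theta^\kappa$ since the geodesics charged by $\hat\Gamma_j$ are then restrictions of geodesics in $\supp(\Gamma)$. With this substitution the rest of your argument goes through as written.
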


\begin{proof}
We fix a geodesic $\Gamma \colon [0, 1] \to \Prob_N^* (\X,  \m)$ such that $\supp(\mu_0),\supp(\mu_1)\subseteq B_R(o )$ and a pair of times $s, t \in [0,1]$, such that $t-s = 2^{-\kappa}$. We consider 
\begin{equation*}
    \hat{\mathsf q}= (e_s,e_t)_\# \Gamma \in \Prob(\X\times \X).
\end{equation*}
It is easy to realize that 
\begin{equation}\label{eq:geoXc}
    \di (x,y) \leq 2^{-\kappa} \diam (\bar B_{2R}(o)) \le \lambda,
\end{equation}
for $\hat{\mathsf q}$-almost every $(x,y)$. Then, for $j = 1, \dots, n$, we define the probability measures $\Gamma_j(s)$ and $\Gamma_j(t)$ by
\begin{equation*}
    \Gamma_j(s):= \frac{1}{\alpha_j}(\p_1)_\# [\hat{\mathsf q}|_{L_j \times \X}] \quad \text{ and } \quad \Gamma_j(t):=\frac{1}{\alpha_j}(\p_2)_\# [\hat{\mathsf q}|_{L_j \times \X}],
\end{equation*}
provided that $\alpha_j := \hat{\mathsf q}(L_j \times \X)> 0$ (otherwise we can define $\Gamma_j(s)$ and $\Gamma_j(t)$ arbitrarily). In the last formula $\p_1,\p_2: \X \times \X \to \X$ denote the projection maps on the first and on the second factor, respectively.  Then $\supp(\Gamma_j(s)) \subseteq \overline{L_j}$ and this, together with \eqref{eq:geoXc}, ensures that
\[
\supp(\Gamma_j(s)) \cup \supp(\Gamma_j(t)) \subseteq \overline{B_\lambda (L_j)} \subseteq X_j.
\]
Moreover, notice that $\Gamma_j(s),\Gamma_j(t)\in \Prob_N^*(\X,\m)$ for every $j$, thus, according to our choice of the sets $X_j$, there exists a geodesic $\hat \Gamma_j$ between $\Gamma_j(s)$ and $\Gamma_j(t) \in \Prob_N^*(\X,\m)$ such that
\begin{equation}\label{eq:CDj}
{S}_{N',\m}\big(\hat\Gamma_j(r)\big) \le \sigma^{(1-r)}_{K', N'}( \theta_j) {S}_{N',\m}(\Gamma_j(s)) + \sigma^{(r)}_{K', N'}( \theta_j) {S}_{N',\m}(\Gamma_j(t)),
\end{equation}
for all $r \in [0, 1]$ and $N' \in [N, 0)$, where
\[
\theta_j := \inf_{\gamma \in \supp(\hat\Gamma_j)} \mathsf d(\gamma(s), \gamma(t)).
\]
Define then the curve $\hat \Gamma \colon [0, 1] \to \Prob_N^* (\X, \m)$ by setting for any $r \in [0, 1]$
\[
 \hat \Gamma(r) := \sum_{j=1}^n \alpha_j \hat\Gamma_j( r).
\]
We observe that $\{\hat \Gamma(r)\}_{r \in [0, 1]}$ is a geodesic between $\Gamma(s) := \sum_{j=1}^n \alpha_j \Gamma_j(s)$ and $\Gamma(t) := \sum_{j=1}^n \alpha_j \Gamma_j(t)$ and so, as a consequence of Proposition \ref{prop:ambrosiogigli} we can actually conclude that 
\begin{equation*}
    \hat\Gamma (r)=\Gamma(s + r(t-s)) \quad \forall r\in[0,1].
\end{equation*}
It is easy to realize that this in particular implies that $\theta_j\geq \theta^\kappa$ for every $j$, and then, keeping in mind \eqref{eq:CDj}, we conclude that 
\begin{equation}\label{eq:thetakappa}
    {S}_{N',\m}\big(\hat\Gamma_j(r)\big) \le \sigma^{(1-r)}_{K', N'}( \theta^\kappa) {S}_{N',\m}(\Gamma_j(s)) + \sigma^{(r)}_{K', N'}( \theta^\kappa) {S}_{N',\m}(\Gamma_j(t)),
\end{equation}
for every $j$. On the other hand, since $\Gamma_j (s)$ are mutually singular for $j = 1, \dots, n$, it is possible to apply Proposition \ref{prop:orthogonality} and conclude that $\hat \Gamma(r)$ are mutually singular for $r\in[0,1)$ and $j = 1, \dots, n$. In particular, the fact that $\Gamma_j (s)$ and $\hat \Gamma_j( r)$ are mutually singular ensures that for every  $N' \in [N, 0)$ it holds
\begin{equation}\label{eq:sumr}
    S_{N', \m} \big(\hat \Gamma(r)\big) = \sum_{j=1}^n \alpha_j^{1-\frac{1}{N'}} S_{N', \m} \big( \hat \Gamma_j(r)\big) \quad \forall r\in[0,1)
\end{equation}
and
\begin{equation}\label{eq:sums}
    S_{N', \m} \big( \Gamma(s)\big) = \sum_{j=1}^n \alpha_j^{1-\frac{1}{N'}} S_{N', \m} \big( \Gamma_j(s)\big).
\end{equation}
On the other hand, the $\Gamma_j(t)$ are not necessarily mutually singular for $j=1, \dots, n$, and so
\begin{equation}\label{eq:sumt}
    S_{N', \m} \big( \Gamma(t)\big) \ge \sum_{j=1}^n \alpha_j^{1-\frac{1}{N'}} S_{N', \m} \big( \Gamma_j(t)\big).
\end{equation}
At this point, summing up for $j = 1, \dots, n$ the inequality \eqref{eq:thetakappa} and making use of \eqref{eq:sumr}, \eqref{eq:sums} and \eqref{eq:sumt}, we obtain
\[
\begin{split}
S_{N', \m} \big( \Gamma(s + r(t-s))\big) &= S_{N', \m} \big(\hat \Gamma(r)\big)=\sum_{j=1}^n \alpha_j^{1-\frac{1}{N'}} {S}_{N',\m}\big(\hat\Gamma_j(r)\big)\\ &\le \sigma^{(1-r)}_{K', N'}( \theta^\kappa) \sum_{j=1}^n \alpha_j^{1-\frac{1}{N'}} {S}_{N',\m}(\Gamma_j(s)) + \sigma^{(r)}_{K', N'}( \theta^\kappa) \sum_{j=1}^n \alpha_j^{1-\frac{1}{N'}} {S}_{N',\m}(\Gamma_j(t))\\
& \le \sigma^{(1-r)}_{K', N'}( \theta^\kappa) S_{N', \m} \big( \Gamma(s)\big) + \sigma^{(r)}_{K', N'}( \theta^\kappa)  S_{N', \m} \big( \Gamma(t)\big)
\end{split}
\]
for all $N' \in [N, 0)$, proving property $\mathsf C(\kappa)$.
\end{proof}

Using these results, the proof of Theorem \ref{thm:globloc} is quite straightforward.
\begin{proof}[Proof of Theorem \ref{thm:globloc}]
Let us fix two probability measures $\mu_0, \mu_1 \in \Prob_N^* (\X, \m)$ such that $$\supp(\mu_0),\supp(\mu_1)\subseteq B_R(o ).$$By assumption, there exists a geodesic $\Gamma$ with domain in $\Prob_N^* (\X, \m)$ connecting them, i.e. $\Gamma(0) = \mu_0$ and $\Gamma(1) = \mu_1$. Now, by Lemma \ref{lemma:Ckappa}, property $\mathsf C(\kappa)$ is satisfied, while Lemma \ref{lemma:k-1} ensures that this implies that $\mathsf C(k)$ holds also for all $k = \kappa-1, \kappa-2, \dots, 0$. In particular, property $\mathsf C(0)$ states that the geodesic $\Gamma$ is such that
\[
S_{N', \m}(\Gamma(t)) \le \sigma_{K', N'}^{(1-t)}(\theta^0)S_{N', \m}(\mu_0) + \sigma_{K', N'}^{(t)}(\theta^0)S_{N', \m}(\mu_1)
\]
for all $N' \in [N, 0)$, where
\begin{equation*}
    \theta^0= \inf_{\gamma \in \supp(\Gamma)} \mathsf d(\gamma(0), \gamma(1)).
\end{equation*}
On the other hand, it is obvious that
\[
\theta^0\geq \theta = \inf_{x_0 \in \mathcal S_0, x_1 \in \mathcal S_1} \di(x_0, x_1),
\]
where $\mathcal S_0 = \supp(\mu_0)$ and $\mathcal S_1 = \supp(\mu_1)$. As a consequence we can conclude that
\[
S_{N', \m}(\Gamma(t)) \le \sigma_{K', N'}^{(1-t)}(\theta)S_{N', \m}(\mu_0) + \sigma_{K', N'}^{(t)}(\theta)S_{N', \m}(\mu_1).
\]
Thanks to the arbitrariness of $K' < K$ and of the metric ball $B_R(o)$, it is possible to apply Proposition \ref{prop:equivalent} and show the validity of the condition $\CD^\ast(K-, N)$ globally in $(\X, \di, \m)$.
\end{proof}

We conclude by noticing that combining Theorem \ref{thm:globloc} with \ref{prop:positivecurv}, we obtain the local-to-global property for the $\CD^*(K,N)$ condition, when the curvature parameter $K$ is non-negative.

\begin{corollary}
Let $K, N \in \R$ with $N<0\leq K$ and let $(\X,\mathsf{d},\m)$ be a locally compact, essentially non-branching metric measure space such that $\Prob_N^* (\X,\m)$ is a geodesic space. If $(\X,\mathsf{d},\m)$ satisfies the condition $\CD^\ast(K, N)$ locally, then it satisfies the condition $\CD^\ast(K, N)$ globally.
\end{corollary}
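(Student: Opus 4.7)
The plan is to chain together the two previously established results in the natural order. First I would observe that for $K \ge 0$, the local $\CD^\ast(K, N)$ condition immediately implies the local $\CD^\ast(K-, N)$ condition: given any $K' < K$, the monotonicity property \eqref{eq:monotonicitysigma} of the coefficients $\sigma^{(t)}_{K, N'}(\theta)$ for $K \ge 0$ gives $\sigma^{(t)}_{K', N'}(\theta) \ge \sigma^{(t)}_{K, N'}(\theta)$, so along the same geodesic witnessing the local $\CD^\ast(K, N)$ inequality, the inequality \eqref{def:CD-} for $\CD^\ast(K-, N)$ holds as well on each of the neighborhoods $X_j$ realizing the local condition. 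This is precisely the easy ``only if'' direction of Proposition \ref{prop:positivecurv}, applied pointwise on the local charts.

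Next I would invoke Theorem \ref{thm:globloc}, whose hypotheses are exactly those of the corollary (local compactness, essential non-branching, geodesic $\Prob_N^\ast(\X, \m)$), to upgrade the local $\CD^\ast(K-, N)$ property obtained in the previous step to a global one. This is the deep step of the argument, but it has already been proved in the theorem.

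Finally I would apply the ``if'' direction of Proposition \ref{prop:positivecurv}, which is exactly the assertion that for $K \ge 0$ the global $\CD^\ast(K-, N)$ condition implies the global $\CD^\ast(K, N)$ condition. Since the curvature parameter is assumed non-negative, this implication is available, and it yields the desired globalization of the original $\CD^\ast(K, N)$ condition.

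The only substantial point to double-check is the very first one, namely that the local form of $\CD^\ast(K, N)$ really does entail the local form of $\CD^\ast(K-, N)$; I expect this to be the most delicate, though still routine, step, since it must be verified uniformly in the choice of $K' < K$ without destroying the local character of the neighborhoods $X_j$. Because the same sets $X_j$ and the same connecting geodesics work for every $K' < K$ thanks solely to the monotonicity of $\sigma^{(t)}_{K', N'}$ in $K'$ (for $K' \ge 0$), no new construction is required, and the rest of the proof reduces to citing Theorem \ref{thm:globloc} and Proposition \ref{prop:positivecurv}.
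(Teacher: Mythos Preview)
Your proposal is correct and follows exactly the same route as the paper, which simply states that the corollary is obtained by combining Theorem~\ref{thm:globloc} with Proposition~\ref{prop:positivecurv}. The only minor imprecision is that the monotonicity you need is in the curvature parameter $K$ rather than in $\theta$ (so \eqref{eq:monotonicitysigma} is not quite the right reference), and it must hold for all $K'<K$, not just $K'\ge 0$; both facts are standard and easily verified from the explicit form of the coefficients.
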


\bibliographystyle{abbrv} 
\bibliography{maps_loctoglo}

\end{document}